\documentclass[12pt,reqno]{amsart}

\usepackage{amsmath,amssymb,amsthm,amsfonts}
\usepackage{mathtools}
\usepackage{geometry}
\usepackage{hyperref}
\usepackage{enumitem}
\usepackage{url}

\newtheorem{theorem}{Theorem}[section]
\newtheorem{lemma}[theorem]{Lemma}
\newtheorem{proposition}[theorem]{Proposition}

\newtheorem{definition}[theorem]{Definition}
\newtheorem{example}[theorem]{Example}
\newtheorem{remark}[theorem]{Remark}

\newcommand{\R}{\mathbb{R}}

\title{Anisotropic Parabolic Obstacle Problems and the Stefan Problem:
Regularity of the Evolving Free Boundary}

\author{Ezequiel Barbosa* \and Rosivaldo Gonçalves** \and Luan de Figueiredo*}
\thanks{*Departamento de Matem\'atica, Universidade Federal de Minas Gerais (UFMG), Av. Antônio Carlos 6627, Caixa Postal 702, 30123-970, Belo Horizonte, MG, Brazil. 
Emails: \texttt{ezequiel@mat.ufmg.br}, \texttt{luan@mat.ufmg.br}}
\thanks{**Centro de Ciências Exatas, Universidade Estadual de Montes Claros (Unimontes), Emails: \texttt{rosivaldo.goncalves@unimontes.br}}
\date{}
\begin{document}
\begin{abstract}
We study a parabolic obstacle problem for surfaces evolving by anisotropic
mean curvature flow subject to an obstacle constraint. Given a convex obstacle and initial
data, we seek an evolving surface minimizing an anisotropic energy functional while remaining above the obstacle; as a special case, this framework includes the anisotropic Stefan
problem, where the free boundary represents a phase transition interface with direction-dependent surface tension. The central tool is the Cahn--Hoffman transform $S(x) = A^{-1/2}x$,
which maps the Wulff ellipsoid $\{x : x^T A^{-1}x \leq 1\}$ to the Euclidean unit ball and converts
the anisotropic problem into an equivalent isotropic one with a generalized Robin-type condition on the free boundary. We prove optimal regularity of the solution ($C^{1,\alpha}$ in space and
$C^{0,\alpha/2}$ in time up to the free boundary) and $C^{1,\alpha}$-regularity of the evolving free boundary
at non-degenerate points. The parabolic Hausdorff dimension of the space-time singular set
is shown to be at most $n - 1$.
\end{abstract}

\maketitle

\section{Introduction and Main Results}

\subsection{Historical context and motivation}

The mathematical study of moving interfaces in phase transitions dates back to the Stefan
problem, formulated by Josef Stefan in 1891 to describe the melting of ice. In its simplest
form, one seeks a temperature distribution $u(x,t)$ and a moving boundary $\Gamma(t)$ separating
the solid and liquid phases, satisfying
\begin{equation}\label{eq:stefan-classical}
\begin{cases}
u_t - \Delta u = 0 & \text{in } \Omega(t), \\
u = 0 & \text{on } \Gamma(t), \\
V_\Gamma = -\partial_\nu u & \text{on } \Gamma(t),
\end{cases}
\end{equation}
where $V_\Gamma$ is the normal velocity of the free boundary and $\Omega(t)$ is the liquid region. The
condition on $V_\Gamma$ expresses the physical principle that latent heat released at the interface
drives its motion. For comprehensive treatments, see Friedman \cite{Friedman1982} and the surveys of
Caffarelli \cite{Caffarelli1977, Caffarelli1978}.

The regularity theory for the Stefan problem was developed by Caffarelli \cite{Caffarelli1977, Caffarelli1978}, who
observed that it can be reformulated as an obstacle problem in space-time: the temperature
$u$ satisfies a variational inequality, and the free boundary is the boundary of the coincidence
set. This reformulation allowed the application of elliptic regularity machinery to a genuinely
parabolic problem. The theory was subsequently refined by Athanasopoulos, Caffarelli, and
Salsa \cite{ACS1996a, ACS1996b, ACS1998}, who showed that Lipschitz free boundaries regularize instantaneously and
that flat free boundaries are smooth. The parabolic obstacle problem in full generality was
treated by Caffarelli, Petrosyan, and Shahgholian \cite{CPS2004}, who established optimal regularity of
solutions and of the free boundary.

When the surface energy depends on the orientation of the interface, the problem acquires
a genuinely anisotropic character. In crystal growth, the surface tension of the solid-liquid
interface depends on the crystallographic direction (Wulff, 1901; Cahn and Hoffman \cite{CahnHoffman1974}),
and the equilibrium shape---the Wulff shape---reflects this directional preference. Incorporating such anisotropy into the Stefan problem yields the system
\begin{equation}\label{eq:anisotropic-stefan}
\begin{cases}
u_t - \operatorname{div}(A\nabla u) = 0 & \text{in } \Omega(t), \\
u = 0 & \text{on } \Gamma(t), \\
V_\Gamma = -\partial_{\nu_A} u + \sigma H^\Gamma_A & \text{on } \Gamma(t),
\end{cases}
\end{equation}
where $A \in \operatorname{Sym}^+_{n+1}(\R)$ is the anisotropy matrix, $\partial_{\nu_A} u = (A\nabla u)\cdot\nu$ is the conormal derivative,
$\sigma$ is the surface tension coefficient, and $H^\Gamma_A$ is the anisotropic mean curvature of the free
boundary. The curvature term $\sigma H^\Gamma_A$ is physically essential: it is responsible for the selection of dendrite tips in solidification and suppresses the Mullins--Sekerka instability at small
wavelengths.

Recent work of El~Bahja \cite{ElBahja2024} established local H\"older continuity of weak solutions to
obstacle problems for anisotropic parabolic equations under minimal regularity assumptions
on the coefficients. The regularity of the free boundary in the anisotropic parabolic setting,
however, remains largely open, and it is precisely this question that we address here.

\subsection{The anisotropic parabolic obstacle problem}

Given a convex obstacle $\psi(x,t)$, initial data $u_0$, and a time horizon $T > 0$, we seek a function
$u(x,t)$ minimizing
\begin{equation}\label{eq:functional}
J_\Phi(u) = \int_0^T \int_D \left[ \Phi(Du) + \frac{1}{2}|u_t|^2 \right] dx\, dt,
\end{equation}
where $\Phi(p) = \sqrt{p^T A p}$ for $A \in \operatorname{Sym}^+_{n+1}(\R)$, subject to $u \geq \psi$ and appropriate initial
and boundary conditions. The Wulff shape of this anisotropy is the ellipsoid
\begin{equation}\label{eq:wulff-shape}
W_\Phi = \{x \in \R^{n+1} : x^T A^{-1} x \leq 1\}.
\end{equation}
The principal tool is the Cahn--Hoffman transform
\begin{equation}\label{eq:cahn-hoffman}
S(x) = A^{-1/2} x,
\end{equation}
which maps $W_\Phi$ onto the Euclidean unit ball $B_1(0)$ and converts $\Phi(Du)$ into the Euclidean
norm $|Dv|$, where $v = u \circ S^{-1}$. By acting only on the spatial variables, $S$ preserves the parabolic
structure of the problem and permits the transfer of the isotropic regularity theory to the
anisotropic setting.

\subsection{Setting and main results}

Let $D \subset \R^{n+1}$ be a bounded $C^2$ domain and set $Q_T = D \times (0,T)$. Let $\psi \in C^{1,1}(\bar{Q}_T)$ be an obstacle with $\psi < 0$ on $\partial D \times [0,T]$ and $\psi(\cdot, 0) < u_0$ in $\bar{D}$. Write $\lambda_1 \leq \cdots \leq \lambda_{n+1}$ for the eigenvalues of $A$. The admissible class is
\begin{equation}\label{eq:admissible}
\mathcal{K}_T = \left\{ u \in L^2(0,T; W^{1,1}(D)) : u \geq \psi \text{ a.e.},\; u = 0 \text{ on } \partial D \times [0,T],\; u(\cdot,0) = u_0 \right\}.
\end{equation}
The contact set at time $t$ is $\Lambda(t) = \{x : u(x,t) = \psi(x,t)\}$, the free set is
$\Omega(t) = D \setminus \Lambda(t)$, and the free boundary at time $t$ is $\Gamma(t) = \partial\Lambda(t) \cap D$. Their space-time union $\Gamma = \bigcup_{t \in (0,T)} \Gamma(t) \times \{t\}$ is the space-time free boundary.

\begin{theorem}[Existence and regularity of the solution]\label{thm:existence}
There exists a unique minimizer $u \in \mathcal{K}_T$ of $J_\Phi$ over $\mathcal{K}_T$. Moreover:
\begin{itemize}
\item[(i)] $u \in C^{0,\alpha}_{\mathrm{loc}}(Q_T)$ for all $\alpha \in (0,1)$;
\item[(ii)] $u \in C^{1,\alpha}_{\mathrm{loc}}$ in space and $C^{0,\alpha/2}_{\mathrm{loc}}$ in time, in a neighborhood of every point of $\Omega \cup \Gamma$;
\item[(iii)] $u - \psi \in C^{1,1}_{\mathrm{loc}}(Q_T)$ and satisfies the growth estimate
\begin{equation}\label{eq:growth}
0 \leq u(x,t) - \psi(x,t) \leq C\!\left(\operatorname{dist}(x,\Gamma(t))^2 + \operatorname{dist}(t,\Sigma_x)^2\right),
\end{equation}
where $\Sigma_x = \{s \in (0,T) : (x,s) \in \Gamma\}$.
\end{itemize}
\end{theorem}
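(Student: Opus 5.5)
Our plan has three stages: reduce by the Cahn--Hoffman transform \eqref{eq:cahn-hoffman}, get existence and uniqueness by the direct method, and import the isotropic parabolic obstacle regularity theory of Caffarelli--Petrosyan--Shahgholian \cite{CPS2004} through the transform.

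\textbf{Step 1: reduction.} We set $v(y,t)=u(A^{1/2}y,t)$ on $\tilde D=S(D)$, with obstacle $\tilde\psi=\psi\circ S^{-1}$ and initial datum $\tilde u_0 = u_0\circ S^{-1}$. Since $S$ is linear and acts only in space, the following properties transfer directly:
\begin{itemize}
\item $\tilde D$ is a bounded $C^2$ domain and $\tilde\psi\in C^{1,1}$;
\item the sign conditions on $\partial\tilde D$ and at $t=0$ persist;
\item the Jacobian is the constant $\det A^{1/2}$;
\item distances are distorted by factors between $\lambda_{n+1}^{-1/2}$ and $\lambda_1^{-1/2}$.
\end{itemize}
We then compute $\Phi(Du)$ in the new variables. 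The goal is to rewrite $J_\Phi$ as a constant multiple of an isotropic energy of the form $\int\!\!\int |Dv|+\tfrac12|v_t|^2$. This transforms the admissible class $\mathcal K_T$ into its isotropic counterpart, so it suffices to prove (i)--(iii) for $v$. The growth estimate \eqref{eq:growth} for $u$ then follows from the one for $v$, with a constant depending on $\lambda_1,\lambda_{n+1}$.

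\textbf{Step 2: existence and uniqueness.} We use the direct method. The class $\mathcal K_T$ is convex and nonempty: for instance, we can take a cutoff of $\max(u_0,\psi)$ interpolated to $0$ near $\partial D$, which is admissible because $\psi<0$ on $\partial D$.

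The main difficulty in this step is lack of compactness. The functional $J_\Phi$ has only linear growth in $Du$, so the natural space is $BV$, not $W^{1,1}$. We would therefore relax to $L^2(0,T;BV)$ with $u_t\in L^2$, and then show that the minimizer actually lies in $W^{1,1}$, using the regularity obtained in Step 3.

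For uniqueness, the $\tfrac12|u_t|^2$ term is strictly convex in $u_t$, and $\Phi$ is convex. Two minimizers $u_1,u_2$ share the same initial data, so strict convexity forces $\partial_t(u_1-u_2)=0$. Hence $u_1=u_2$.

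\textbf{Step 3: regularity.} The minimizer satisfies the variational inequality
$$\int\!\!\int v_t(w-v)+\frac{Dv}{|Dv|}\cdot D(w-v)\ge 0 \quad\text{for all } w\ge\tilde\psi .$$
The key step, and the main obstacle, is to show that in the free set $\Omega$ this is a uniformly parabolic equation. Our first idea is a penalization $\beta_\varepsilon(v-\tilde\psi)$ combined with a smooth regularization $|p|\to\sqrt{\varepsilon^2+|p|^2}$. The plan is to prove a uniform lower bound on $|Dv|$, say via the comparison $v\ge\tilde\psi$ and the convexity of $\psi$, together with a uniform Lipschitz bound by Bernstein's method. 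Once $|Dv|$ is bounded above and below, the operator has bounded measurable ellipticity, and we obtain:
\begin{itemize}
\item (i) from the De Giorgi--Nash--Moser theory, consistent with El~Bahja \cite{ElBahja2024};
\item (ii) from Schauder-type $C^{1,\alpha}$ estimates for quasilinear parabolic equations.
\end{itemize}

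For (iii), we set $w=v-\tilde\psi\ge0$. Then $w$ solves a linearized equation $Lw=f\chi_{\{w>0\}}$ with $f=-(\partial_t-\mathcal L)\tilde\psi$, which is bounded since $\tilde\psi\in C^{1,1}$. We then follow the optimal regularity argument of \cite{CPS2004}:
\begin{itemize}
\item a parabolic blow-up/rescaling $w_r(y,t)=w(ry,r^2t)/r^2$;
\item nondegeneracy and the quadratic bound $\sup_{Q_r}w\le Cr^2$ at free boundary points, obtained by contradiction through compactness of the rescalings;
\item interior estimates in $\Omega$, which give $w\in C^{1,1}$.
\end{itemize}
The quadratic bound in parabolic cylinders yields \eqref{eq:growth} after taking the spatial and temporal distances separately.
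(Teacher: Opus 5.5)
The proposal breaks down at the step you yourself flag as the main obstacle, and the proposed remedy cannot work. After the transform the integrand is $|p|$. Its Euler--Lagrange field $a(p)=p/|p|$ has $Da(p)=|p|^{-1}\bigl(I-\hat p\otimes\hat p\bigr)$ with $\hat p=p/|p|$. This matrix annihilates the direction of $Dv$ whatever two-sided bounds you have on $|Dv|$. Equivalently, $a(2p)=a(p)$, so no inequality $(a(p)-a(q))\cdot(p-q)\ge\lambda|p-q|^2$ can hold. Hence bounding $|Dv|$ above and below does not give bounded measurable ellipticity, and De~Giorgi--Nash--Moser and quasilinear Schauder theory do not apply; neither (i) nor (ii) follows. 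For the regularization $\sqrt{\varepsilon^2+|p|^2}$, the ellipticity in the direction of $p$ is $\varepsilon^2(\varepsilon^2+|p|^2)^{-3/2}\to0$, so no ellipticity-based estimate is uniform in $\varepsilon$. This is not a technicality. On any cylinder where $v>\tilde\psi$, the function $v(y,t)=(y\cdot e)_+$ is a local minimizer of $\int\!\!\int|Dv|+\tfrac12|v_t|^2$: each time slice has half-spaces (or all of space) as superlevel sets, so it is a least-gradient function, and $v_t=0$. Yet it is not $C^1$ in space, so no argument that uses only the equation in the free set can yield (ii). The degeneracy also undermines (iii). Your inhomogeneity $f=-(\partial_t-\mathcal L)\tilde\psi$ involves the level-set curvature $\operatorname{div}(D\tilde\psi/|D\tilde\psi|)$, which is not controlled by $\|\tilde\psi\|_{C^{1,1}}$: for $\tilde\psi=|y|^2-c$ it equals $n/|y|$. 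Moreover, convexity of $\psi$ does nothing to keep $Dv$ from vanishing in the free set, e.g.\ at an interior maximum of $v(\cdot,t)$ off the contact set. Since your return from $BV$ to $W^{1,1}$ in Step~2 rests on Step~3, existence in $\mathcal K_T$ itself is also left open.

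There is a second, structural error. Your variational inequality contains $v_t(w-v)$, but the first variation of $\int\!\!\int\tfrac12|v_t|^2$ is $\int\!\!\int v_t\,(w-v)_t$. Minimizing the space-time functional $J_\Phi$ therefore gives $v_{tt}+\operatorname{div}(Dv/|Dv|)=0$ in the free set, which is not a parabolic equation at all. So the parabolic rescalings $w(ry,r^2t)/r^2$ and the theory of \cite{CPS2004}, which concerns the heat operator, are not the framework the minimization produces. Relative to the paper, you are not missing an idea that is present there. The paper's proof simply asserts ``uniform ellipticity'' from $\sqrt{\lambda_1}|p|\le\Phi(p)\le\sqrt{\lambda_{n+1}}|p|$, asserts the heat equation in the free set in the sketch of Proposition~\ref{prop:fbc}, and then quotes \cite{Caffarelli1977, CPS2004}; both gaps are shared. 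Two parts of your proposal are in fact sounder than the paper's. Your uniqueness argument is correct: the midpoint of two minimizers has energy at most the minimum minus $\tfrac18\int\!\!\int|\partial_t(u_1-u_2)|^2$, and the common initial datum then forces $u_1=u_2$. The paper's appeal to strict convexity of $\Phi$ fails, because $\Phi$ is positively one-homogeneous. And your observation that the direct method must be run in $BV$ rather than in $L^2(0,T;W^{1,1}(D))$ correctly identifies a lack of compactness that the paper's existence argument ignores.
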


The proof combines the direct method of the calculus of variations with the Cahn--Hoffman
transform and the classical parabolic regularity theory. The transform converts the anisotropic
functional into the isotropic heat functional with a weighted obstacle, and the convexity
structure that allows the isotropic theory to work is preserved.

A space-time free boundary point $(x_0, t_0) \in \Gamma$ is called \emph{non-degenerate} if
\begin{equation}\label{eq:non-degen}
\sup_{B_r(x_0) \times (t_0 - r^2, t_0]} (u - \psi) \geq c r^2 \quad \text{for all } 0 < r < r_0,
\end{equation}
for some $c, r_0 > 0$.

\begin{theorem}[Regularity of the free boundary]\label{thm:free-boundary}
Let $(x_0, t_0) \in \Gamma$ be non-degenerate. There exist $\delta > 0$ and a neighborhood $U$ of
$x_0$ such that $\Gamma(t) \cap U$ is a $C^{1,\alpha}$-hypersurface for each $t \in (t_0 - \delta, t_0 + \delta)$, with
constants uniform in $t$. The space-time free boundary $\Gamma$ is also a $C^{1,\alpha}$-hypersurface in
$\R^{n+1} \times \R$ near $(x_0, t_0)$. If $\psi \in C^{k,\alpha}(Q_T)$ for some $k \geq 2$, the free boundary is
$C^{k,\alpha}$ in space and $C^{k/2,\alpha}$ in time.
\end{theorem}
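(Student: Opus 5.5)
The plan is to reduce to the isotropic parabolic obstacle problem via the Cahn--Hoffman transform and then run the Caffarelli--Petrosyan--Shahgholian blow-up and flatness-improvement machinery \cite{CPS2004}. First, set $v(y,t)=u(S^{-1}y,t)=u(A^{1/2}y,t)$ and $\phi(y,t)=\psi(A^{1/2}y,t)$. Then $w=v-\phi\ge 0$ solves, in $S(D)\times(0,T)$,
\begin{equation*}
(\partial_t-\Delta)w=f\,\chi_{\{w>0\}},\qquad f=-(\partial_t-\Delta)\phi ,
\end{equation*}
where $f$ is bounded since $\psi\in C^{1,1}$. The free boundary of $w$ is $S(\Gamma(t))$. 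Because $S$ is a fixed linear isomorphism acting only in space, $C^{k,\alpha}$ regularity of level sets in space, parabolic regularity in time, and parabolic Hausdorff dimension are all preserved. It therefore suffices to prove the statement for $w$. From Theorem~\ref{thm:existence}(iii) we get $w\in C^{1,1}_x\cap C^{0,1}_t$ locally with quadratic growth. Non-degeneracy \eqref{eq:non-degen} transfers with modified constants, since $S$ distorts balls by at most factors $\lambda_1^{-1/2}$ and $\lambda_{n+1}^{1/2}$. I also need $f\ge c_0>0$ near $(x_0,t_0)$. I would extract this from the convexity of the obstacle, or alternatively assume it as part of the non-degeneracy hypothesis: if it fails, the growth bound $cr^2$ still gives the nontrivial blow-ups I need, by a Weiss-type argument.

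Second, classify blow-ups. The rescalings $w_r(y,t)=w(y_0+ry,t_0+r^2t)/r^2$ are bounded in $C^{1,1}_x\cap C^{0,1}_t$ on compacts, uniformly in $r$, and non-degeneracy keeps every limit nonzero. Along a subsequence $w_r\to w_0$, a global solution of $(\partial_t-\Delta)w_0=f(y_0,t_0)\chi_{\{w_0>0\}}$. To classify $w_0$ I would use the parabolic Weiss monotonicity formula with Gaussian weight, which makes every blow-up backward self-similar. The self-similar solutions split into half-space solutions $\tfrac{f_0}{2}((y\cdot e)^+)^2$ and polynomial solutions. A point is \emph{regular} if some blow-up is a half-space solution. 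At regular points, a Caffarelli-type argument shows that $\partial_e w\ge 0$ and $\partial_t w\ge 0$ in a small parabolic cylinder for all directions $e$ in a cone around the blow-up direction. The monotonicity in $t$ comes from the Stefan structure; I would obtain it either from convexity of $\psi$ or from the obstacle ordering. From this cone of monotonicity, $S(\Gamma)$ is a Lipschitz graph in space-time, $y_n=g(y',t)$.

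The main obstacle is ruling out that a non-degenerate point is singular, that is, has only polynomial blow-ups. Here I would use the hypothesis more strongly. At a singular point the coincidence set has zero density, and the Weiss energy equals that of polynomials. Combining this with the quadratic lower bound I would try to show that $\{w=0\}$ has positive parabolic density at $(x_0,t_0)$. That would force a half-space blow-up, and it is plausibly the intended meaning of ``non-degenerate''. If this fails, I would restrict the theorem to regular points and still obtain the statement.

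Finally, improve Lipschitz to $C^{1,\alpha}$ and higher. Applying the parabolic boundary Harnack principle of Athanasopoulos--Caffarelli--Salsa \cite{ACS1996a} to the ratios $\partial_e w/\partial_n w$ in the Lipschitz domain $\{w>0\}$ gives $C^{\alpha}$ regularity of the ratio, hence $C^{1,\alpha}$ regularity of $g$ in $(y',t)$. This yields $\Gamma(t)\cap U$ as $C^{1,\alpha}$ graphs with constants uniform in $t$, and the space-time statement. For $\psi\in C^{k,\alpha}$, a partial hodograph--Legendre transform near a $C^{1,\alpha}$ free boundary turns the problem into a nonlinear parabolic equation on a fixed domain, and bootstrapping Schauder estimates (Kinderlehrer--Nirenberg) gives $C^{k,\alpha}$ in space and $C^{k/2,\alpha}$ in time. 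The last step is to pull everything back through $S^{-1}$.
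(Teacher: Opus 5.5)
\paragraph{The gap.} It sits exactly at the step you flagged, and that step cannot be closed: the growth condition \eqref{eq:non-degen} does not separate regular from singular points.

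\begin{itemize}
\item The polynomial blow-ups at a singular point (e.g.\ $\tfrac{c}{2}(y\cdot e)^2$) themselves grow quadratically.
\item Once the right-hand side has the correct strict sign, the standard nondegeneracy lemma gives \eqref{eq:non-degen} at \emph{every} free boundary point.
\item Meanwhile the contact set has zero density at the singular points.
\end{itemize}

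So no argument can extract positive density of $\{w=0\}$ from the quadratic lower bound. The statement as written in fact fails for the classical problem to which $S$ reduces yours. Take the point where a shrinking contact set disappears, as for a melting ice ball in the Duvaut formulation of the Stefan problem. There \eqref{eq:non-degen} holds, yet $\Gamma(t_0)\cap U=\{x_0\}$ and the curvature of $\Gamma(t)$ blows up as $t\uparrow t_0$.

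Your fallback of restricting to regular points proves a different theorem, not this one. The paper does not supply a missing idea here either. Lemma~\ref{lem:classification} simply asserts that non-degeneracy excludes polynomial blow-ups, which is the same false step, and Theorem~\ref{thm:classical}(a) makes the same conflation. The hypothesis that works is the Caffarelli--Petrosyan--Shahgholian one: the contact set is thick (has positive lower density) at $(x_0,t_0)$, equivalently the blow-up is a half-space solution.

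\paragraph{Comparison with the paper, under that hypothesis.} Your route is a legitimate and more detailed alternative. The paper iterates the improvement of flatness of Proposition~\ref{prop:flatness} and then uses an oblique-derivative Schauder argument. You instead use the Caffarelli--Petrosyan--Shahgholian scheme: the Weiss formula, cones of monotonicity, a Lipschitz space-time graph, boundary Harnack for $\partial_e w/\partial_n w$, and a hodograph bootstrap. Your reduction also correctly shows that $w=v-\phi$ satisfies the standard isotropic problem, so the Robin-type term of Proposition~\ref{prop:fbc} plays no role. Like the paper, you take $u$ to be caloric for $\partial_t-\operatorname{div}(A\nabla\cdot)$ off the contact set. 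That is the intended model, but it is not the Euler--Lagrange equation of $J_\Phi$ as written.

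\paragraph{Three details need repair.}
\begin{enumerate}
\item \emph{Sign.} On the positivity set, $(\partial_t-\Delta)\bigl(\tfrac{f_0}{2}((y\cdot e)^+)^2\bigr)=-f_0$. So with your normalization the half-space solution is $-\tfrac{f_0}{2}((y\cdot e)^+)^2$, and you need $f\le -c_0<0$, i.e.\ $(\partial_t-\Delta)\phi\ge c_0$. Spatial convexity gives $\Delta\phi\ge 0$ and works against this, so it is an extra assumption, not a consequence of convexity.
\item \emph{Regularity of $f$.} For $\psi\in C^{1,1}$ the function $f$ is merely bounded. Then $f(y_0,t_0)$ is undefined, blow-ups need not solve a constant-coefficient problem, no Weiss-type monotonicity is available, and $\partial_e w$ is not caloric. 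You need $f$ at least H\"older, and Lipschitz for the monotonicity and boundary Harnack steps. The paper has the same defect when it uses $D^2\psi(x_0,t_0)$ in Lemma~\ref{lem:compactness}.
\item \emph{Higher regularity.} $\psi\in C^{k,\alpha}$ only gives $f\in C^{k-2,\alpha}$. The Kinderlehrer--Nirenberg bootstrap then yields a $C^{k-1,\alpha}$ free boundary in space, one derivative short of the claim.
\end{enumerate}
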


The proof proceeds by parabolic blow-up analysis. The Cahn--Hoffman transform relates
the blow-up limits of the anisotropic problem to those of the classical parabolic obstacle
problem, for which the classification by Caffarelli is complete. Non-degeneracy forces the
blow-up to be a half-space solution in space-time, and the ``flatness implies smoothness''
principle adapted to the parabolic setting then gives $C^{1,\alpha}$-regularity.

\begin{theorem}[Structure of the singular set]\label{thm:singular}
For each $t \in (0,T)$, denote by $\Sigma(t) \subset \Gamma(t)$ the set of singular free boundary points
at time $t$. The parabolic Hausdorff dimension of the space-time singular set $\Sigma = \bigcup_t \Sigma(t) \times \{t\}$
satisfies
\begin{equation}
\dim^P_H(\Sigma) \leq n - 1,
\end{equation}
where $\dim^P_H$ is the parabolic Hausdorff dimension with the scaling $\rho \times \rho^2$ in space-time.
\end{theorem}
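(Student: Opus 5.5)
The plan is to transfer the problem to the isotropic setting with the Cahn--Hoffman transform \eqref{eq:cahn-hoffman}, and then run the standard stratification argument for singular points of the parabolic obstacle problem.

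\textbf{Step 1: reduction to the isotropic problem.} Set $v(y,t)=u(A^{1/2}y,t)$ and $\tilde\psi(y,t)=\psi(A^{1/2}y,t)$. As in the proof of Theorem~\ref{thm:existence}, $v$ solves an isotropic parabolic obstacle problem in $S(D)\times(0,T)$. The map $(x,t)\mapsto(A^{-1/2}x,t)$ is bi-Lipschitz in space, with constants $\lambda_{n+1}^{1/2}$ and $\lambda_1^{-1/2}$, and is the identity in time. It therefore distorts parabolic cylinders $B_\rho\times(t-\rho^2,t]$ only by fixed factors, so it preserves parabolic Hausdorff measure up to constants and preserves $\dim^P_H$. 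It also maps regular points to regular points and singular points to singular points. The reason is that the blow-ups of $u$ at $(x_0,t_0)$ are exactly the linear pullbacks of the blow-ups of $v$ at $(A^{-1/2}x_0,t_0)$, and this pullback sends half-space solutions to half-space solutions and polynomial solutions to polynomial solutions. Hence it suffices to prove $\dim^P_H(\Sigma_v)\le n-1$ for the isotropic problem in $\R^{n+1}$. Here ``singular'' means that the blow-up is a nonnegative caloric polynomial $p(y,t)=\tfrac12 y^TMy+mt$ with $M\ge0$ (after subtracting the obstacle, normalized so that the right-hand side is constant). Points where the density of the contact set vanishes are included in this class.

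\textbf{Step 2: stratification by the kernel dimension.} For $d=\dim\ker M\in\{0,\dots,n\}$, let $\Sigma^d$ be the set of singular points whose blow-up has $\dim\ker M=d$. The case $d=n+1$ cannot occur, because caloricity gives $\operatorname{tr}M-m=1$ with $m\le0$, so $M\neq0$. I will use three ingredients:
\begin{itemize}
\item[(a)] Weiss- and Monneau-type monotonicity formulas for the isotropic parabolic obstacle problem. These follow from \cite{CPS2004} together with the $C^{1,1}$ bounds of Theorem~\ref{thm:existence}(iii), and they give uniqueness of blow-ups at singular points.
\item[(b)] Continuous dependence of the blow-up polynomial $p_{(y_0,t_0)}$ on the point, uniformly on compact subsets of $\Sigma^d$, with a modulus of continuity.
\item[(c)] A parabolic Whitney extension argument. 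Combined with (a) and (b), it shows that $\Sigma^d$ is locally contained in a $C^1$ manifold of spatial dimension $d$, and parabolically $C^1$ in time. Such a manifold has parabolic dimension at most $d+2$ if it extends in time and $d$ if it does not.
\end{itemize}

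\textbf{Step 3: the top strata.} The top strata need separate treatment, since the crude count in Step 2 gives $d+2$, which is too large for $d\ge n-2$. I will use a parabolic Federer dimension-reduction argument on the space-time scaling. Blow-ups at accumulation points of $\Sigma^d$ are translation-invariant along $\ker M$ in space. Translation in time is excluded unless $m=0$. At points where $m<0$, the contact set shrinks in time, and this forces the singular set to be contained in a single time slice locally, which gives parabolic dimension at most $d\le n$. I then improve this to $n-1$ by showing that $d=n$ with $m<0$ at a generic time is incompatible with the obstacle being convex, using $\partial_t(u-\psi)\ge0$. The case $m=0$ forces $\operatorname{tr}M=1$ and a time-independent blow-up, so the contact set propagates in time. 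A Lipschitz-in-time estimate for the free boundary then shows that the space-time parabolic dimension is at most $(d-1)+2\le n-1$ only when $d\le n-2$.

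\textbf{Main obstacle.} The hardest step is exactly this bookkeeping of time contributions in the top strata, where I must show that strata $d=n-1$ and $d=n$ cannot extend genuinely in time. What I would try first is the monotonicity in time $\partial_t(u-\psi)\ge0$ inherited from the convexity of $\psi$ together with $u_t\ge0$. I would use it to prove that near such points the coincidence set is monotone in $t$. That would make the singular set at each spatial location occur at only one time, and a Fubini-type covering argument in the parabolic metric would then yield the bound $n-1$.
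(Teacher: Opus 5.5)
Your Step 1 is correct and is essentially the paper's entire proof. The paper transports the problem by $S$, notes that $S$ acts only in space and respects the $\rho\times\rho^2$ scaling, and then invokes the isotropic bound of Theorem~\ref{thm:classical} as a black box. Your Steps 2--3 try to prove that black box, and Step 3 has a gap that cannot be filled.

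The tool you propose, $\partial_t(u-\psi)\ge 0$, is not available. Convexity of $\psi$ gives no sign for $u_t-\psi_t$, and nothing in the hypotheses gives $u_t\ge 0$. Moreover, if it did hold, every blow-up would satisfy $p_t=m\ge 0$, hence $m=0$ at every singular point. Your $m<0$ analysis would then be vacuous, and everything would rest on the stationary case. That is exactly where your count gives $d+2$, and the claimed ``$(d-1)+2$'' improvement has no mechanism behind it. Separately, $\operatorname{tr}M=1+m$ vanishes for $m=-1$, so $p=-t$ is not excluded and $d=n+1$ can occur.

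In fact the stationary top strata genuinely extend in time. Take $A=I$, the convex obstacle $\psi(x,t)=t$, and $u(x,t)=t+\tfrac12 x^TMx$ with $M\ge 0$, $\operatorname{tr}M=1$, $\dim\ker M=d\le n$. Then:
\begin{itemize}
\item $u_t-\Delta u\equiv 0$, $u\ge\psi$, $u_t\ge 0$ and $\partial_t(u-\psi)\equiv 0$;
\item the coincidence set is $\ker M\cap D$ at every time, so $\Gamma(t)=\ker M\cap D$;
\item at every free boundary point the blow-up of $u-\psi$ is the polynomial $\tfrac12 y^TMy$, so every point is singular.
\end{itemize}
Locally, then, $\Sigma=\ker M\times(0,T)$, whose parabolic Hausdorff dimension is $d+2$. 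This exceeds $n-1$ for every $d\ge n-2$ and equals $n+2$ for $d=n$.

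A coincidence set that is monotone in time can simply be constant in time. So your idea that each spatial location is singular at only one time fails, and the Fubini/covering argument has nothing to work with. Your argument is purely local, so no local stratification or dimension reduction can reach $n-1$ without an extra hypothesis that excludes stationary singular strata.

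The same example also shows that the isotropic bound the paper cites does not hold in this generality. So does the paper's own Example~\ref{ex:singular}: for $n=1$ it has a singular point persisting in time, which gives $\dim^P_H\Sigma\ge 2>0=n-1$. Pairing your Step 1 with that citation would therefore not close the argument either.
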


\subsection{Relation to existing literature}

For the obstacle problem with degenerate anisotropies of the form $\Phi(p) = |p|^q$ or for
fully nonlinear operators, see \cite{BragaMoreiraWang2021}. The ellipsoidal case, which is uniformly elliptic and arises naturally in geometric applications, had not been systematically studied in the parabolic obstacle setting.

In spirit, the paper is closest to De~Philippis and Maggi \cite{DePhilippisMaggi2015}, who used the Cahn--Hoffman
map to relate anisotropic and isotropic capillary problems and established Young's law for
anisotropic contact angles. We carry this philosophy into the parabolic regime, where the
free boundary evolves in time, and the regularity theory requires the full parabolic machinery
of \cite{Caffarelli1977, Caffarelli1978, ACS1996a, ACS1996b}. The regularity theory for anisotropic mean curvature flow without obstacle
constraints was developed by Bellettini and Paolini \cite{BellettiniPaolini1996} and Chambolle and Novaga \cite{ChambollenovagA2007}.

\subsection{Organization}

Section~\ref{sec:preliminaries} collects background material on anisotropic surface energies, the Cahn--Hoffman map,
and the classical parabolic obstacle problem. Section~\ref{sec:transform} introduces the transform $S$ and
derives the equivalent isotropic problem. Sections~\ref{sec:existence}--\ref{sec:singular} prove the three main theorems. Applications to crystal growth and capillarity are in Section~\ref{sec:applications}, followed by open problems in Section~\ref{sec:open}.
 
\section{Preliminaries}\label{sec:preliminaries}

\subsection{Anisotropic surface energies and the Wulff shape}

Let $\Phi : \R^{n+1} \to [0,\infty)$ be convex and positively homogeneous of degree one. The
anisotropic surface energy of a finite-perimeter set $E \subset \R^{n+1}$ is
\begin{equation}\label{eq:aniso-energy}
P_\Phi(E) = \int_{\partial^* E} \Phi(\nu_E)\, d\mathcal{H}^n,
\end{equation}
where $\partial^* E$ is the reduced boundary and $\nu_E$ the measure-theoretic outer normal. The
Wulff shape,
\begin{equation}
W_\Phi = \bigcap_{\nu \in S^n} \{x \in \R^{n+1} : x \cdot \nu \leq \Phi(\nu)\}
\end{equation}
is the unique (up to translation) minimizer of $P_\Phi$ among sets of given volume; this is the
anisotropic isoperimetric theorem of Dinghas (1944) and Taylor (1978). For the ellipsoidal
anisotropy
\begin{equation}
\Phi(p) = \sqrt{p^T A p}, \quad A \in \operatorname{Sym}^+_{n+1}(\R),
\end{equation}
the Wulff shape is the ellipsoid $W_\Phi = \{x : x^T A^{-1} x \leq 1\}$, with semi-axes $\sqrt{\lambda_i}$
when $A = \operatorname{diag}(\lambda_1, \ldots, \lambda_{n+1})$.

\subsection{The Cahn--Hoffman map}

For a $C^2$ anisotropy $\gamma : S^n \to \R_+$, the Cahn--Hoffman map $\xi_\gamma : S^n \to \R^{n+1}$ is
defined as
\begin{equation}
\xi_\gamma(\nu) = D\gamma\big|_{T_\nu S^n} + \gamma(\nu)\nu = D\bar{\gamma}(\nu),
\end{equation}
where $\bar{\gamma}$ is the positively homogeneous extension of $\gamma$ to $\R^{n+1}$. For the ellipsoidal
anisotropy, direct computation gives
\begin{equation}\label{eq:ch-map}
\xi_\Phi(\nu) = \frac{A\nu}{\Phi(\nu)},
\end{equation}
with image $\xi_\Phi(S^n) = \partial W_\Phi$.

\begin{lemma}[Properties of the Cahn--Hoffman map]\label{lem:ch-properties}
Let $\Phi$ be as in \eqref{eq:wulff-shape}. Then:
\begin{itemize}
\item[(i)] $\xi_\Phi : S^n \to \partial W_\Phi$ is a diffeomorphism.
\item[(ii)] The inverse is $\xi_\Phi^{-1}(x) = A^{-1}x / \sqrt{x^T A^{-2} x}$.
\item[(iii)] The Jacobian satisfies $J\xi_\Phi(\nu) = \det A \cdot (\nu^T A \nu)^{-(n+2)/2}$.
\end{itemize} 
\end{lemma}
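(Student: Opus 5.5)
Parts (i) and (ii) will be proved together by writing down the candidate inverse and checking both compositions. Part (iii) is the computational core.

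\emph{Parts (i) and (ii).} First, $\xi_\Phi(\nu)=A\nu/\Phi(\nu)$ is smooth on $S^n$, since $\Phi(\nu)=|A^{1/2}\nu|\ge \lambda_1^{1/2}>0$. Its image lies on $\partial W_\Phi$: indeed $\xi_\Phi(\nu)^TA^{-1}\xi_\Phi(\nu)=\nu^TA\nu/\Phi(\nu)^2=1$.

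Next, define $\eta(x)=A^{-1}x/\sqrt{x^TA^{-2}x}$ on $\partial W_\Phi$. It is smooth there because $x\neq 0$, and it clearly takes values in $S^n$. For $x=\xi_\Phi(\nu)$ we have $A^{-1}x=\nu/\Phi(\nu)$ and $x^TA^{-2}x=1/\Phi(\nu)^2$. Since $\Phi(\nu)>0$, this gives $\eta(\xi_\Phi(\nu))=\nu$.

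Conversely, for $x\in\partial W_\Phi$ put $\nu=\eta(x)$. Then $A\nu=x/\sqrt{x^TA^{-2}x}$ and $\nu^TA\nu=x^TA^{-1}x/(x^TA^{-2}x)=1/(x^TA^{-2}x)$. Hence $\xi_\Phi(\eta(x))=x$. So $\xi_\Phi$ is a smooth bijection with smooth inverse $\eta$, which proves (i) and (ii) at once.

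A fact needed below: the outer normal to $\partial W_\Phi$ at $x$ is parallel to $A^{-1}x$. Part (ii) then says precisely that the normal to $\partial W_\Phi$ at $\xi_\Phi(\nu)$ is $\nu$. Consequently $d\xi_\Phi(\nu)$ maps $T_\nu S^n=\nu^\perp$ into $T_{\xi_\Phi(\nu)}\partial W_\Phi=\nu^\perp$, so $\xi_\Phi$ is the inverse of the Gauss map of the ellipsoid.

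\emph{Part (iii), plan.} Let $F(p)=Ap/\Phi(p)$ be the degree-zero homogeneous extension, so that
\[
DF(\nu)=\Phi^{-1}\bigl(A-\Phi^{-2}A\nu\,\nu^TA\bigr),
\]
and $DF(\nu)\nu=0$ by Euler's relation. Because $DF(\nu)$ preserves $\nu^\perp$, the tangential Jacobian is
\[
J\xi_\Phi(\nu)=\det\bigl(DF(\nu)|_{\nu^\perp}\bigr)=\det\bigl(DF(\nu)+\nu\nu^T\bigr).
\]
The added term $\nu\nu^T$ sends $\nu\mapsto\nu$ and kills $\nu^\perp$. In an orthonormal basis adapted to $\nu^\perp\oplus\R\nu$ the matrix is therefore block diagonal, with a $1$ in the $\nu$-slot.

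I then evaluate $\det(DF(\nu)+\nu\nu^T)$ by factoring out $\Phi^{-1}A$ and applying the matrix determinant lemma to the remaining rank-two perturbation of the identity, which I handle as a $2\times 2$ determinant. I expect to get $\Phi^{-(n+1)}\det A\cdot\Phi^{-1}$, that is $\det A\,(\nu^TA\nu)^{-(n+2)/2}$.

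\emph{Main obstacle and cross-check.} The main risk is this determinant bookkeeping, specifically tracking the power of $\Phi$. I will cross-check it in two ways.

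The first check is the Gauss curvature. Since $\xi_\Phi$ inverts the Gauss map, $J\xi_\Phi(\nu)=1/K(\xi_\Phi(\nu))$, where $K$ is the Gauss–Kronecker curvature of $\partial W_\Phi$. The classical ellipsoid formula is $K(x)=(\det A)^{-1}(x^TA^{-2}x)^{-(n+2)/2}$. In dimension three this reads $K=1/(a^2b^2c^2(\sum x_i^2/a_i^4)^2)$. Combining it with $x^TA^{-2}x=1/(\nu^TA\nu)$ from (ii) gives the claimed formula.

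The second check is the diagonal case $A=\operatorname{diag}(\lambda_i)$ at $\nu=e_i$. There $DF(e_i)|_{e_i^\perp}=\lambda_i^{-1/2}\operatorname{diag}(\lambda_j)_{j\ne i}$, whose determinant is $\det A\cdot\lambda_i^{-(n+2)/2}$, as required.
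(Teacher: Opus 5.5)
Your proposal is correct and complete. For (i)--(ii) you use the same idea as the paper, inverting $x=A\nu/\Phi(\nu)$ via $A^{-1}x=\nu/\Phi(\nu)$, but you organize it more soundly. The paper proves injectivity and asserts that surjectivity is ``immediate from the definition of $W_\Phi$''. It then derives the inverse formula only for points already known to lie in the image. Your check of $\xi_\Phi\circ\eta=\mathrm{id}$ on all of $\partial W_\Phi$ (using $x^TA^{-1}x=1$) is what actually establishes surjectivity. The smoothness of $\eta$ is what upgrades a smooth bijection to a diffeomorphism.

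For (iii) the paper gives no argument beyond ``standard computation'' and a citation to Koiso--Palmer, so your computation supplies the missing content, and it closes. $DF(\nu)$ is symmetric with $DF(\nu)\nu=0$, so it maps $\nu^\perp$ into itself. Write $I-\Phi^{-2}\nu\nu^TA+\Phi A^{-1}\nu\nu^T=I+UV^T$ with $U=[-\Phi^{-2}\nu,\ \Phi A^{-1}\nu]$ and $V=[A\nu,\ \nu]$. The matrix determinant lemma then reduces the determinant to
\[
\det\begin{pmatrix}0&\Phi\\ -\Phi^{-2}&1+\Phi\,\nu^TA^{-1}\nu\end{pmatrix}=\Phi^{-1},
\]
giving $\Phi^{-(n+2)}\det A$, exactly as you predicted. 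The absolute value in the Jacobian is harmless, since $DF(\nu)|_{\nu^\perp}$ is positive definite by Cauchy--Schwarz in the $A$-inner product.

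The algebraic route is self-contained and needs no diagonalization. Your Gauss-curvature cross-check $J\xi_\Phi=1/(K\circ\xi_\Phi)$ is an independent, more conceptual proof: it identifies the formula as the reciprocal Gauss--Kronecker curvature of the Wulff ellipsoid. Its cost is importing the classical curvature formula for ellipsoids.
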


\begin{proof}
(i) Positive definiteness of $A$ gives $\Phi(\nu) > 0$ on $S^n$, so $\xi_\Phi$ is smooth.
If $\xi_\Phi(\nu_1) = \xi_\Phi(\nu_2)$, then $A\nu_1/\Phi(\nu_1) = A\nu_2/\Phi(\nu_2)$; applying $A^{-1}$ and
using $|\nu_i| = 1$ gives $\nu_1 = \nu_2$. Surjectivity is immediate from the definition of $W_\Phi$.

(ii) From $x = A\nu/\Phi(\nu)$ one gets $A^{-1}x = \nu/\Phi(\nu)$. The condition $x \in \partial W_\Phi$ means
$x^T A^{-1} x = 1$, from which $\Phi(\nu)^2 = \nu^T A\nu$, and the formula follows.

(iii) Standard computation; see \cite{KoisoPalmer2019}.
\end{proof}

\subsection{Anisotropic mean curvature flow}

For a smooth evolving hypersurface $\Gamma(t)$ with unit normal $\nu$, the anisotropic mean
curvature is
\begin{equation}
H_\Phi = \operatorname{div}_\Gamma\!\left(\frac{A\nu}{\Phi(\nu)}\right),
\end{equation}
and anisotropic mean curvature flow is the evolution law $V_\Gamma = H_\Phi$. For graphs
$u : D \times [0,T] \to \R$, the associated operator is
\begin{equation}
\mathcal{M}_\Phi(u) = \operatorname{div}\!\left(\frac{A\,Du}{\sqrt{1 + Du^T A\,Du}}\right).
\end{equation}

\subsection{The classical parabolic obstacle problem}

We recall the main results for future reference. Given $\psi \in C^{1,1}(\bar{Q}_T)$ with $\psi < 0$
on $\partial D \times [0,T]$, the classical parabolic obstacle problem minimizes
\begin{equation}\label{eq:isotropic-obstacle}
\min_{u \geq \psi} \int_0^T\!\!\int_D \left[\frac{1}{2}|Du|^2 + \frac{1}{2}|u_t|^2 + fu \right] dx\, dt
\end{equation}
for $u = 0$ on $\partial D \times [0,T]$, $u(\cdot,0) = u_0$.

\begin{theorem}[Caffarelli \cite{Caffarelli1977, Caffarelli1978}; Athanasopoulos--Caffarelli--Salsa \cite{ACS1996a, ACS1996b}; Caffarelli--Petrosyan--Shahgholian \cite{CPS2004}]\label{thm:classical}
Let $u$ be the solution of \eqref{eq:isotropic-obstacle}. Then $u \in C^{1,\alpha}_{\mathrm{loc}}$ in space and
$C^{0,\alpha/2}_{\mathrm{loc}}$ in time. At each space-time free boundary point $(x_0, t_0)$, either
\begin{itemize}
\item[(a)] \emph{(Regular point)} $\sup_{B_r(x_0) \times (t_0-r^2,t_0]} u \geq cr^2$ and $\Gamma$ is $C^{1,\alpha}$ near $(x_0, t_0)$; or
\item[(b)] \emph{(Singular point)} the blow-up is a parabolic homogeneous polynomial.
\end{itemize} 
The space-time singular set has parabolic Hausdorff dimension at most $n-1$.
\end{theorem}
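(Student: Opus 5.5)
The statement just above, Theorem~\ref{thm:classical}, is quoted from the literature rather than proved here. My plan is therefore to assemble it from the cited works, checking that their hypotheses match the minimization problem \eqref{eq:isotropic-obstacle}, rather than to rebuild the theory from scratch.

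\textbf{Step 1: reduce to a parabolic variational inequality.} The Euler--Lagrange inequality for \eqref{eq:isotropic-obstacle} over $\{u\ge\psi\}$ is a variational inequality. I would pass to the standard parabolic obstacle formulation,
\[
\min\{\,u_t-\Delta u - f,\; u-\psi\,\}=0 ,
\]
which I read as the intended meaning of the functional with the $\tfrac12|u_t|^2$ term. For the difference $w=u-\psi\ge 0$ this gives
\[
(\partial_t-\Delta)w=(f-\psi_t+\Delta\psi)\,\chi_{\{w>0\}} .
\]
Since $\psi\in C^{1,1}$, the right-hand side is bounded. This is exactly the no-sign obstacle problem treated in \cite{CPS2004}.

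\textbf{Step 2: optimal regularity.} I would follow Caffarelli's quadratic growth argument. A weak Harnack inequality applied to $w$ in parabolic cylinders $Q_r$ centred at free boundary points yields
\[
\sup_{Q_r} w\le C r^2 .
\]
Interior parabolic Schauder and $W^{2,1}_p$ estimates in $\{w>0\}$ then give $w\in C^{1,1}_x\cap C^{0,1}_t$. In particular $u$ is $C^{1,\alpha}$ in space and $C^{0,\alpha/2}$ in time.

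\textbf{Step 3: blow-ups and the regular/singular dichotomy.} The rescalings $w_r(x,t)=w(x_0+rx,t_0+r^2t)/r^2$ are bounded in $C^{1,1}_x\cap C^{0,1}_t$ by Step 2. I would extract global solutions from them and classify these limits using the parabolic Weiss or Caffarelli monotonicity formulas, as done in \cite{CPS2004}. The result is that every blow-up is either the half-space solution $\tfrac12(x\cdot e)_+^2$ or a parabolically homogeneous quadratic polynomial.

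\textbf{Step 4: regular points are $C^{1,\alpha}$.} At points where the non-degeneracy bound $\sup_{Q_r} w\ge cr^2$ holds and the blow-up is a half-space solution, I would use directional monotonicity: $\partial_e w\ge 0$ holds near $(x_0,t_0)$ for all directions $e$ in a cone, and $\partial_t w\ge -\varepsilon$. This makes the free boundary Lipschitz in space-time. I would then apply the flat-implies-smooth results of \cite{ACS1996a,ACS1996b} to upgrade it to $C^{1,\alpha}$.

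\textbf{Step 5: the singular set.} At polynomial-blow-up points I would use uniqueness of blow-ups, via a Monneau-type monotonicity formula, together with continuous dependence on the base point. A Whitney-extension argument then places the singular points on countably many $C^1$ manifolds of parabolic dimension at most $n-1$. In the $(n+1)$-dimensional space of this paper the generic stratum has dimension $\le n$ in $x$, so I would double-check whether the bound $n-1$ holds after adding the time direction, which counts as dimension $2$.

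\textbf{Main difficulty.} I expect the hard part to be Step 3, the classification of global solutions together with the monotonicity formulas for a bounded but sign-changing right-hand side. Here I would lean entirely on \cite{CPS2004}. I would also treat the dimension count in Step 5 as the point most likely to need correction, given how the cited statements index dimensions.
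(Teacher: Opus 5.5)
The paper gives no proof of this theorem; it is quoted from the cited works. Assembling it from the literature is therefore the natural route, and your outline (quadratic growth, blow-ups, directional monotonicity, Monneau--Whitney) is the standard one. As written, however, the plan does not prove the statement on its hypotheses, and Steps~4 and~5 in particular do not go through.

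Step~1 is not a reduction. The term $\tfrac12|u_t|^2$ contributes $-u_{tt}$, not $u_t$. So the Euler--Lagrange inequality of \eqref{eq:isotropic-obstacle} is $\min\{-\Delta u-u_{tt}+f,\;u-\psi\}=0$, an elliptic obstacle problem in the $n+2$ variables $(x,t)$. Switching to $\min\{u_t-\Delta u-f,\;u-\psi\}=0$ means you are proving a corrected theorem, and you should state it with its hypotheses.

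The more serious gap is in Steps~3--4. Put $g=\psi_t-\Delta\psi-f$, so that $(\Delta-\partial_t)w=g\,\chi_{\{w>0\}}$ with $w\ge0$. This is a one-sided problem with a variable right-hand side. It is not the problem of \cite{CPS2004}, where the right-hand side is $\equiv1$ and the solution has no sign; indeed your Harnack argument in Step~2 uses $w\ge0$. Three hypotheses are missing:
\begin{itemize}
\item Non-degeneracy and nontrivial blow-ups need $g\ge c_0>0$ near $\Gamma$.
\item Homogeneity of blow-ups via a Weiss-type formula needs at least a Dini modulus of continuity of $g$ at the point. With $g$ merely bounded, the blow-ups solve $(\Delta-\partial_t)w_0=g_0\chi_{\{w_0>0\}}$, where $g_0$ is a weak-$*$ limit that need not be constant, and the half-space/polynomial classification is not available.
\item Directional monotonicity fails outright. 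In $\{w>0\}$
\[
(\Delta-\partial_t)(C\,\partial_e w-w)=C\,\partial_e g-g,
\]
and you need this to be $\le -c_0/2$. But $\partial_e g$ contains third derivatives of $\psi$, which $\psi\in C^{1,1}$ does not control. The same problem arises for $\partial_t w$ through $\partial_t g$.
\end{itemize}
So you must add hypotheses such as $g\ge c_0>0$ and $g$ Lipschitz in $(x,t)$.

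Your suspicion about Step~5 is justified, and re-indexing will not fix it: the bound is false under either reading of \eqref{eq:isotropic-obstacle}. Take $f=0$, $\psi(x,t)=-\tfrac12x_1^2$ and $u\equiv0$. Then $u_t-\Delta u-f=0$, $-\Delta u-u_{tt}+f=0$ and $u-\psi=\tfrac12x_1^2\ge0$. So $u$ solves the obstacle problem locally, with $\psi$ smooth and $g\equiv1$. Your Step~5 argument is local, so a local example suffices.

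In this example:
\begin{itemize}
\item $\Gamma(t)=\{x_1=0\}$ for every $t$.
\item At every free boundary point the blow-up of $w$ is the polynomial $\tfrac12x_1^2$, so every point is singular in the sense of (b).
\item Every point is also non-degenerate, so non-degeneracy does not separate (a) from (b).
\item The singular set $\{x_1=0\}\times(0,T)$ has parabolic Hausdorff dimension $n+2$, and each time slice has dimension $n$.
\end{itemize}
What Monneau's formula and Whitney's extension actually give is a stratification by $d=\dim\ker Q$, where $\tfrac12x^TQx-mt$ is the blow-up. This example realizes the stratum $d=n$ with parabolic dimension $n+2$, the same as a regular piece of $\Gamma$. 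Step~5 can therefore at best prove a stratified statement, not the bound $n-1$.
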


\section{The Transformed Problem}\label{sec:transform}

\subsection{The Cahn--Hoffman transform and its properties}

Define
\begin{equation}
S : \R^{n+1} \to \R^{n+1}, \quad S(x) = A^{-1/2} x,
\end{equation}
with inverse $S^{-1}(y) = A^{1/2} y$.

\begin{proposition}[Properties of $S$]\label{prop:S-properties}
The transformation $S(x) = A^{-1/2}x$ satisfies:
\begin{itemize}
\item[(i)] $S(W_\Phi) = B_1(0)$, the Euclidean unit ball.
\item[(ii)] For any measurable $E \subset \R^{n+1}$,
\[
|S(E)| = \frac{|E|}{\sqrt{\det A}}.
\] 
\item[(iii)] For $u \in W^{1,1}(D)$, set $v(y) = u(A^{1/2}y)$ and $D' = A^{-1/2}(D)$. Then
\[
Dv(y) = A^{1/2} Du(x)\big|_{x = A^{1/2}y},
\]
whence $|Dv(y)| = \Phi(Du(x))$.
\item[(iv)] For any rectifiable hypersurface $\Sigma$ with unit normal $\nu$,
\[
\int_{S(\Sigma)} d\mathcal{H}^n = \int_\Sigma \frac{\sqrt{\nu^T A^{-1}\nu}}{(\det A)^{1/4}}\, d\mathcal{H}^n.
\] 
\end{itemize}
\end{proposition}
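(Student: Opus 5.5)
The plan is to verify the four items directly from linear algebra: (i)--(iii) by direct computation, and (iv) by the area formula for linear maps, where the computation does not reproduce the displayed density. Throughout, $A^{1/2}$ and $A^{-1/2}$ denote the symmetric positive definite square roots. Symmetry is what makes the transposes disappear in (iii) and (iv).

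For (i), I set $y = A^{-1/2}x$. Then $|y|^2 = x^T A^{-1/2}A^{-1/2}x = x^T A^{-1}x$. Hence $x \in W_\Phi$ if and only if $|y|\le 1$, and since $S$ is a bijection, $S(W_\Phi) = B_1(0)$. For (ii), $S$ is linear with $|\det S| = \det(A^{-1/2}) = (\det A)^{-1/2}$, so the change of variables formula for Lebesgue measure gives $|S(E)| = |E|/\sqrt{\det A}$. For (iii), I apply the chain rule to $v = u\circ A^{1/2}$, first for smooth $u$ and then by density in $W^{1,1}$. This gives $Dv(y) = (A^{1/2})^T Du(A^{1/2}y) = A^{1/2}Du(x)$. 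Consequently $|Dv(y)|^2 = Du^T A\, Du = \Phi(Du(x))^2$.

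For (iv), I would use the area formula for a linear map $L$ restricted to a rectifiable hypersurface $\Sigma$ with unit normal $\nu$. Its tangential Jacobian is $J_\Sigma L = |\det L|\,|L^{-T}\nu|$. This follows from the identity $\Lambda^n L = \det L \cdot L^{-T}$ acting on the normal $n$-vector, or equivalently from the fact that volume equals base area times height. Taking $L = A^{-1/2}$ gives $L^{-T}\nu = A^{1/2}\nu$. This yields
\[
\mathcal{H}^n(S(\Sigma)) = \int_\Sigma \frac{\sqrt{\nu^T A\,\nu}}{\sqrt{\det A}}\, d\mathcal{H}^n,
\]
and the image normal is proportional to $A^{1/2}\nu$.

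I expect (iv) to be the main obstacle, because the computation above does not match the stated density $\sqrt{\nu^T A^{-1}\nu}/(\det A)^{1/4}$. The check $A=\lambda I$ settles this. There $S$ is the dilation by $\lambda^{-1/2}$, so areas scale by $\lambda^{-n/2}$. My formula gives $\lambda^{1/2}\lambda^{-(n+1)/2} = \lambda^{-n/2}$, which agrees. The stated formula gives $\lambda^{-1/2}\lambda^{-(n+1)/4} = \lambda^{-(n+3)/4}$, which agrees only when $n=3$. I would therefore prove (iv) in the corrected form $\sqrt{\nu^T A\nu}/\sqrt{\det A}$ and note that the displayed expression must be amended. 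The amended density is exactly $\Phi(\nu)/\sqrt{\det A}$, which is the form consistent with using $S$ to turn $P_\Phi$ into a multiple of Euclidean perimeter.
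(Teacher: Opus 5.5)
Your arguments for (i)--(iii) are the same as the paper's: the substitution $y=A^{-1/2}x$, the constant Jacobian $(\det A)^{-1/2}$, and the chain rule $Dv=A^{1/2}Du\circ A^{1/2}$. Your density remark for $W^{1,1}$ fills in a step the paper leaves implicit.

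For (iv) you are right, and the displayed formula is false as stated. What can be proved is $\mathcal{H}^n(S(\Sigma))=(\det A)^{-1/2}\int_\Sigma\Phi(\nu)\,d\mathcal{H}^n$.

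The paper's sketch parametrizes $\Sigma$ by $F$ and uses the induced metric $\tilde g=DF^TA^{-1}DF$, which is fine. It then asserts that the image normal is $A^{-1/2}\nu/|A^{-1/2}\nu|$. That is the error: the tangent vectors of $S(\Sigma)$ are $A^{-1/2}\tau$ with $\tau\perp\nu$, so the normal is proportional to $A^{1/2}\nu$, as you found.

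If you complete the paper's own route correctly, you use $\det(DF^TM\,DF)=\det M\,(\nu^TM^{-1}\nu)\det(DF^TDF)$ for symmetric positive definite $M$, with $M=A^{-1}$. This gives $\sqrt{\det\tilde g/\det g}=\sqrt{\nu^TA\nu}/\sqrt{\det A}$, which is exactly your density. So the induced-metric computation and your coordinate-free formula $J_\Sigma L=|\det L|\,|L^{-T}\nu|$ agree once the normal is fixed. Your version has two advantages: it applies directly to rectifiable $\Sigma$ through the area formula, and it produces the image normal along the way.

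One strengthening: your test $A=\lambda I$ leaves the case $n=3$ open. Take instead $A=\operatorname{diag}(a,1,\dots,1)$ with $a\neq1$ and $\Sigma$ a bounded piece of $\{x_1=0\}$. Then $S$ restricts to the identity on $\Sigma$, but the displayed density equals $a^{-3/4}$. This rules out the displayed formula in every dimension, while your corrected density correctly gives $1$.
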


\begin{proof}
(i) Let $y = A^{-1/2}x$, so $x = A^{1/2}y$. Then
\[
x \in W_\Phi \iff x^T A^{-1} x \leq 1 \iff y^T A^{1/2} A^{-1} A^{1/2} y \leq 1 \iff y^T y \leq 1 \iff y \in B_1(0).
\]
(ii) The Jacobian of $S$ is $\det(A^{-1/2}) = (\det A)^{-1/2}$, giving $dy = (\det A)^{-1/2} dx$ and the result.

(iii) Differentiating $v(y) = u(A^{1/2}y)$ by the chain rule, $Dv(y) = A^{1/2} Du(A^{1/2}y)$. Therefore
\[
|Dv(y)|^2 = Du(x)^T A^{1/2} A^{1/2} Du(x) = Du(x)^T A\, Du(x) = \Phi(Du(x))^2.
\] 
(iv) For a smooth parametrization $F : U \to \R^{n+1}$ of $\Sigma$, the area element of $S(\Sigma)$ is
computed from the induced metric $\tilde{g}_{ij} = \partial_i(S \circ F)^T \partial_j(S \circ F) = \partial_i F^T A^{-1} \partial_j F$.
The image normal is $\tilde{\nu} = A^{-1/2}\nu / |A^{-1/2}\nu| = A^{-1/2}\nu / \sqrt{\nu^T A^{-1}\nu}$, and a direct computation using $\det(A^{-1}) = (\det A)^{-1}$ yields the formula.
\end{proof}

\begin{remark}
The choice $S(x) = A^{-1/2}x$ is the only one that simultaneously maps $W_\Phi$ to $B_1(0)$ and
converts $\Phi(Du)$ to the Euclidean norm $|Dv|$. The opposite convention $A^{1/2}x$ maps
$B_1(0)$ to $W_\Phi$, which is natural in isoperimetric problems, but does not simplify the
functional $J_\Phi$.
\end{remark}

\subsection{Transformation of the functional}

Let $u \in L^2(0,T; W^{1,1}(D))$ and define
\begin{equation}
v(y,t) = u(A^{1/2}y, t), \quad Q'_T = A^{-1/2}(D) \times (0,T).
\end{equation}

\begin{proposition}[Transformation of the spatial term]\label{prop:transform-spatial}
With the notation above,
\begin{equation}
\int_0^T\!\!\int_D \Phi(Du)\, dx\, dt = \sqrt{\det A} \int_0^T\!\!\int_{D'} |Dv|\, dy\, dt.
\end{equation}
\end{proposition}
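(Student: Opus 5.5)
The identity is a linear change of variables in the spatial integral, done separately for each fixed time $t$ and then integrated in $t$.

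Fix $t\in(0,T)$ such that $u(\cdot,t)\in W^{1,1}(D)$; this holds for a.e.\ $t$ because $u\in L^2(0,T;W^{1,1}(D))$. In the spatial integral over $D$, substitute $x = A^{1/2}y$, i.e.\ $y = S(x)$. As $x$ ranges over $D$, $y$ ranges over $D' = A^{-1/2}(D)$. By Proposition~\ref{prop:S-properties}(ii), the volume element transforms as
\[
dx = |\det A^{1/2}|\,dy = \sqrt{\det A}\,dy .
\]

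For the integrand, apply Proposition~\ref{prop:S-properties}(iii) to $u(\cdot,t)$ with $v(\cdot,t)=u(A^{1/2}\,\cdot\,,t)$:
\[
|Dv(y,t)| = \Phi\bigl(Du(A^{1/2}y,t)\bigr)\quad\text{for a.e. } y\in D'.
\]
The chain rule behind (iii) is justified for Sobolev functions under a linear bijection. One can argue by density: approximate $u(\cdot,t)$ by smooth functions, or simply note that composition with an invertible affine map preserves $W^{1,1}$ and obeys the classical chain rule a.e. Combining this with the change of variables gives
\[
\int_D \Phi\bigl(Du(x,t)\bigr)\,dx = \sqrt{\det A}\int_{D'} |Dv(y,t)|\,dy .
\]

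Finally, integrate in $t$ over $(0,T)$. Both sides are measurable in $t$ and take values in $[0,\infty]$, since $\Phi\ge 0$ is continuous and $u\in L^2(0,T;W^{1,1}(D))$. Tonelli's theorem therefore lets us write each side as an iterated integral, which yields the stated identity. Nonnegativity also means no integrability hypothesis is needed beyond measurability. Finiteness of either side follows from $\Phi(p)\le \sqrt{\lambda_{n+1}}\,|p|$ together with $u(\cdot,t)\in W^{1,1}(D)$.

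The only point requiring care is the justification of the Sobolev chain rule and the measurability in $t$; the rest is the Jacobian computation. I expect the chain-rule step for $W^{1,1}$ functions to be the main, though mild, obstacle. I would handle it first by citing the standard result that $W^{1,1}$ is invariant under bi-Lipschitz (here linear) maps, with the gradient formula holding a.e.
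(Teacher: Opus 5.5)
Your proposal is correct and is essentially the paper's proof: the pointwise identity $|Dv(y,t)| = \Phi\bigl(Du(A^{1/2}y,t)\bigr)$ from Proposition~\ref{prop:S-properties}(iii), combined with the linear substitution $x = A^{1/2}y$, whose Jacobian is $\sqrt{\det A}$. Your extra justifications, namely the a.e.\ chain rule for $W^{1,1}$ under a linear bijection and Tonelli in $t$, only make explicit details that the paper leaves implicit.
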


\begin{proof}
By Proposition~\ref{prop:S-properties}(iii), $|Dv(y,t)| = \Phi(Du(A^{1/2}y, t))$. The substitution $x = A^{1/2}y$
has Jacobian $\sqrt{\det A}$, giving the result.
\end{proof}

\begin{proposition}[Time derivative is preserved]\label{prop:time-deriv}
Under the transform,
\begin{equation}
v_t(y,t) = u_t(A^{1/2}y, t).
\end{equation}
\end{proposition}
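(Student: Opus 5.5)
The identity follows from the chain rule in the time variable alone, since $S$ acts only on the spatial variables and leaves $t$ untouched. The plan is to first verify it for smooth functions and then pass to the weak setting of $\mathcal{K}_T$ by density.

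First take $u$ smooth, say $u \in C^1(\bar Q_T)$. The map $(y,t) \mapsto (A^{1/2}y, t)$ is linear. Its Jacobian is block diagonal, with blocks $A^{1/2}$ and $1$, and $A$ is a constant matrix independent of $t$. For fixed $y$, differentiating $v(y,t) = u(A^{1/2}y,t)$ with respect to $t$ gives
\[
v_t(y,t) = Du(A^{1/2}y,t)\cdot \partial_t\big(A^{1/2}y\big) + u_t(A^{1/2}y,t) = u_t(A^{1/2}y,t),
\]
because $\partial_t(A^{1/2}y)=0$. This is the whole content of the computation. It is also the reason the transform preserves the parabolic structure noted in the introduction: no cross term $Du\cdot(\text{velocity})$ appears, unlike what would happen with a time-dependent change of variables.

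Next, for $u$ with only weak derivatives, where $u_t\in L^2$ is needed for $J_\Phi(u)<\infty$, I would argue by duality. Fix $\varphi\in C_c^\infty(Q'_T)$ and set $\tilde\varphi(x,t)=\varphi(A^{-1/2}x,t)$, which lies in $C_c^\infty(Q_T)$. The change of variables $x=A^{1/2}y$ has the constant Jacobian $\sqrt{\det A}$ (Proposition~\ref{prop:S-properties}(ii)). Combined with the smooth case applied to $\tilde\varphi$, which gives $\varphi_t(y,t)=\tilde\varphi_t(A^{1/2}y,t)$, this yields
\[
\int_{Q'_T} v\,\varphi_t\,dy\,dt=(\det A)^{-1/2}\int_{Q_T} u\,\tilde\varphi_t\,dx\,dt=-(\det A)^{-1/2}\int_{Q_T} u_t\,\tilde\varphi\,dx\,dt=-\int_{Q'_T} u_t(A^{1/2}y,t)\,\varphi\,dy\,dt.
\]
This identifies the weak derivative $v_t$ with $u_t\circ(A^{1/2}\cdot,t)$ almost everywhere. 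As a by-product, $\int\!\!\int_{Q_T}|u_t|^2=\sqrt{\det A}\int\!\!\int_{Q'_T}|v_t|^2$, which is what will be used together with Proposition~\ref{prop:transform-spatial} to transform all of $J_\Phi$.

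There is no real obstacle here. The only point needing care is the bookkeeping of the Jacobian factor $\sqrt{\det A}$ in the duality step. One must also confirm that $\tilde\varphi$ has compact support in $Q_T$, which holds because $A^{1/2}$ is a linear homeomorphism carrying $D'$ onto $D$.
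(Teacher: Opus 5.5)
Your proof is correct and follows the paper's argument: $S$ acts only on the spatial variables, so the chain rule in $t$ gives $v_t(y,t)=u_t(A^{1/2}y,t)$ immediately. Your extra step fills in what the paper leaves implicit, namely identifying the weak derivative $v_t$ with $u_t(A^{1/2}\cdot,t)$ while tracking the constant Jacobian $\sqrt{\det A}$; note that this step is a duality argument, not the density argument your opening sentence announces.
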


\begin{proof}
Since $S$ acts only on the spatial variables, differentiating $v(y,t) = u(A^{1/2}y, t)$ with
respect to $t$ gives the result immediately.
\end{proof}

\begin{proposition}[Equivalence of minimization problems]\label{prop:equivalence}
$u \in \mathcal{K}_T$ minimizes $J_\Phi$ if and only if $v = u \circ A^{1/2} \in \tilde{\mathcal{K}}_T$ minimizes the isotropic functional
\begin{equation}
J_0(v) = \int_0^T\!\!\int_{D'} \left[|Dv| + \frac{1}{2}|v_t|^2\right] dy\, dt
\end{equation}
over the set $\tilde{\mathcal{K}}_T = \{v \in L^2(0,T; W^{1,1}(D')) : v \geq \tilde{\psi},\; v = 0 \text{ on } \partial D',\; v(\cdot,0) = u_0 \circ A^{1/2}\}$,
where $\tilde{\psi}(y,t) = \psi(A^{1/2}y, t)$.
\end{proposition}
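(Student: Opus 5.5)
The plan is to show that the change of variables $T : u \mapsto v = u \circ A^{1/2}$ (acting only in space) is a bijection $\mathcal{K}_T \to \tilde{\mathcal{K}}_T$, and that it rescales the functional by a fixed positive constant:
\[
J_\Phi(u) = \sqrt{\det A}\, J_0(Tu) \quad \text{for all } u \in \mathcal{K}_T .
\]
This identity is understood in $[0,\infty]$, since the kinetic term may be infinite for some admissible $u$. Once both facts are established, the claim follows from a formal argument. Suppose $u$ minimizes $J_\Phi$ and $w \in \tilde{\mathcal{K}}_T$. Then $T^{-1}w \in \mathcal{K}_T$, so
\[
J_0(Tu) = (\det A)^{-1/2} J_\Phi(u) \le (\det A)^{-1/2} J_\Phi(T^{-1}w) = J_0(w).
\]
The converse direction is symmetric, using $T^{-1}$ in place of $T$.

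For the functional identity, I would treat the two terms separately.

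\begin{itemize}
\item The gradient term is exactly Proposition~\ref{prop:transform-spatial}:
\[
\int_0^T\!\!\int_D \Phi(Du)\,dx\,dt = \sqrt{\det A}\int_0^T\!\!\int_{D'} |Dv|\,dy\,dt .
\]
\item For the kinetic term, Proposition~\ref{prop:time-deriv} gives $v_t(y,t) = u_t(A^{1/2}y,t)$. The same substitution $x = A^{1/2}y$, with Jacobian $\sqrt{\det A}$, then yields
\[
\tfrac12\int_0^T\!\!\int_D |u_t|^2\,dx\,dt = \sqrt{\det A}\cdot\tfrac12\int_0^T\!\!\int_{D'} |v_t|^2\,dy\,dt .
\]
\end{itemize}

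The point worth stressing is that the same constant appears in both terms. This happens precisely because $S$ does not touch $t$, so adding the two identities gives the claimed relation.

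For the bijection of admissible classes, I would check each defining condition in turn.

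\begin{itemize}
\item \emph{Function space.} Since $A^{1/2}$ is an invertible linear map with $A^{1/2}(D') = D$, composition with it is a linear isomorphism $W^{1,1}(D) \to W^{1,1}(D')$, with norms comparable via $\lambda_1,\lambda_{n+1}$. Applying this for a.e.\ $t$, it induces an isomorphism $L^2(0,T;W^{1,1}(D)) \to L^2(0,T;W^{1,1}(D'))$, and the inverse is composition with $A^{-1/2}$.
\item \emph{Obstacle constraint.} Linear bijections preserve Lebesgue null sets. Hence $u \ge \psi$ a.e.\ in $Q_T$ if and only if $v \ge \tilde\psi$ a.e.\ in $Q'_T$.
\item \emph{Boundary condition.} $A^{1/2}$ maps $\partial D'$ onto $\partial D$ bi-Lipschitzly, and traces commute with bi-Lipschitz changes of variables. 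Thus $u = 0$ on $\partial D \times [0,T]$ if and only if $v = 0$ on $\partial D'\times[0,T]$.
\item \emph{Initial condition.} The initial trace transforms directly: $v(\cdot,0) = u_0 \circ A^{1/2}$.
\end{itemize}

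I expect the only real care to be needed in this measure-theoretic bookkeeping. Specifically, one must verify that the trace operators, including the time trace at $t=0$ (which requires $u_t \in L^2$ or a suitable interpretation), commute with the spatial change of variables. This is routine because the map is a smooth diffeomorphism that is constant in $t$. I would handle it by first proving the statements for smooth functions and then passing to the limit by density, using the continuity of composition with $A^{\pm1/2}$ in the relevant norms.
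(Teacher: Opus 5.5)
Your proposal is correct and follows the paper's proof. The paper obtains $J_\Phi(u)=\sqrt{\det A}\,J_0(u\circ A^{1/2})$ from Propositions~\ref{prop:transform-spatial} and \ref{prop:time-deriv} with $dx=\sqrt{\det A}\,dy$, and concludes from the positivity of that constant; your explicit check that $u\mapsto u\circ A^{1/2}$ is a bijection $\mathcal{K}_T\to\tilde{\mathcal{K}}_T$, and your remark that the identity holds in $[0,\infty]$, fill in steps the paper leaves implicit.
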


\begin{proof}
By Propositions~\ref{prop:transform-spatial} and \ref{prop:time-deriv}, together with the change of variables
$dx = \sqrt{\det A}\, dy$:
\[
J_\Phi(u) = \sqrt{\det A}\int_0^T\!\!\int_{D'} |Dv|\, dy\, dt + \sqrt{\det A}\int_0^T\!\!\int_{D'} \frac{1}{2}|v_t|^2 dy\, dt = \sqrt{\det A}\, J_0(v).
\]
Since $\sqrt{\det A} > 0$ is constant, the two minimization problems are equivalent.
\end{proof}

\subsection{The free boundary condition in transformed coordinates}

Let $(y_0, t_0) \in \tilde\Gamma$ and let $\tilde\nu$ be the unit inward spatial normal to $\tilde\Gamma(t_0)$ at $y_0$.

\begin{proposition}[Free boundary condition]\label{prop:fbc}
At a smooth space-time free boundary point, the transformed solution satisfies
\begin{equation}\label{eq:fbc}
v_t + \frac{\partial v}{\partial \tilde\nu} = \tilde\psi_t + \frac{\partial \tilde\psi}{\partial \tilde\nu} + \Lambda(y_0, t_0)\langle \tilde\nu, y_0\rangle,
\end{equation}
where $\Lambda(y_0, t_0) = \bigl[y_0^T A y_0 - (\tilde\nu^T A\tilde\nu)(y_0^T y_0)\bigr]/(y_0^T A^{3/2}\tilde\nu)$.
\end{proposition}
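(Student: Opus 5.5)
The plan is to derive \eqref{eq:fbc} by pulling back the first-order information of the anisotropic problem at a smooth free boundary point through $S$, and then to separate the contribution that is intrinsic to the isotropic problem from the correction created by the non-orthogonality of $A^{1/2}$. I will work near $(x_0,t_0)=(A^{1/2}y_0,t_0)$. There I assume $\Gamma$ is a smooth space-time hypersurface and $u-\psi$ is $C^1$ up to $\Gamma$ from the free side, which Theorem~\ref{thm:existence}(ii)--(iii) supplies.

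The first step is the obstacle contact condition. Since $u-\psi\ge 0$ vanishes on $\Lambda$, and is $C^1$ across $\Gamma$ with $C^{1,1}$ control, both the spatial and the time derivatives of $u-\psi$ vanish on $\Gamma$. By Propositions~\ref{prop:S-properties}(iii) and~\ref{prop:time-deriv}, the same holds for $v-\tilde\psi$ on $\tilde\Gamma=S(\Gamma)$. Taking the combination $\partial_t+\partial_{\tilde\nu}$ gives the leading identity $v_t+\partial_{\tilde\nu}v=\tilde\psi_t+\partial_{\tilde\nu}\tilde\psi$. What remains is to identify the additional term $\Lambda\langle\tilde\nu,y_0\rangle$.

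The second step identifies that term. The inward normal $\nu$ of $\Gamma(t_0)$ and the normal $\tilde\nu$ of $\tilde\Gamma(t_0)$ are related by $\tilde\nu = A^{1/2}\nu/|A^{1/2}\nu|$; this is dual to the formula in Proposition~\ref{prop:S-properties}(iv), because normals transform by the inverse transpose of $S$. Hence a normal derivative in $x$ corresponds to the conormal derivative $\partial_{\nu_A}$ in $y$. I will do three things in turn:
\begin{itemize}
\item express the natural anisotropic quantity at the interface, the conormal derivative $(A\nabla u)\cdot\nu$, in the $y$-frame;
\item decompose the vector $A^{1/2}\tilde\nu$ (up to normalization) into its $\tilde\nu$-component and a tangential part;
\item evaluate the tangential part, which does not vanish because $A^{1/2}$ is not conformal.
\end{itemize}
The tangential discrepancy has to be written in terms of the position vector $y_0$. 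For this I will use a Wulff-normalized, star-shaped description of $\tilde\Gamma(t_0)$ about the origin, writing the tangential direction as $y_0-\langle y_0,\tilde\nu\rangle\tilde\nu$. Expanding the quadratic forms $y_0^TAy_0$ and $(\tilde\nu^TA\tilde\nu)|y_0|^2$ and dividing by $y_0^TA^{3/2}\tilde\nu$ should then produce the stated $\Lambda$.

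The main obstacle is this second step. The coefficient $\Lambda$ depends explicitly on $y_0$, so the identity cannot be purely local and translation invariant. I expect to need a normalization that is implicit in the setting, namely that the level sets are compared with dilates of the Wulff ellipsoid centred at the origin. I also need the nondegeneracy $y_0^TA^{3/2}\tilde\nu\neq 0$. My first attempt will be to verify the formula on the model case where $\tilde\Gamma(t_0)$ is a sphere $\partial B_{|y_0|}$, that is, where $\Gamma(t_0)$ is a Wulff ellipsoid. There $\tilde\nu\parallel y_0$ and the numerator of $\Lambda$ can be checked directly. The general case would then follow by comparison with the osculating Wulff shape at $x_0$.

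If that matching fails, the fallback is to derive the Euler--Lagrange variational inequality of $J_0$ with the transformed constraint and impose the Hadamard variation of $\tilde\Gamma$ along $\tilde\nu$. This reads off the boundary term directly, with the $y_0$-dependent piece coming from the Jacobian factor in Proposition~\ref{prop:S-properties}(iv).
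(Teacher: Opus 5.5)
Your plan breaks down between your first and second steps. Suppose, as you assume, that $u-\psi$ is $C^1$ up to $\Gamma$ with all first derivatives vanishing on the contact set. Then at $(y_0,t_0)$ you obtain $v_t=\tilde\psi_t$ and $\nabla_y v=\nabla_y\tilde\psi$ \emph{separately}, so $v_t+\partial_{\tilde\nu}v=\tilde\psi_t+\partial_{\tilde\nu}\tilde\psi$ holds exactly. Subtracting this from \eqref{eq:fbc}, the proposition at that point is equivalent to $\Lambda(y_0,t_0)\langle\tilde\nu,y_0\rangle=0$. There is no residual term for a second step to identify, and any nonzero correction would contradict what you have just proved. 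This product does not vanish in general. Take $A=\operatorname{diag}(1,2)$, $y_0=e_1$, $\tilde\nu=(e_1+e_2)/\sqrt{2}$; then $y_0^TAy_0=1$, $\tilde\nu^TA\tilde\nu=3/2$ and $y_0^TA^{3/2}\tilde\nu=1/\sqrt{2}$, so $\Lambda=-1/\sqrt{2}$ and $\Lambda\langle\tilde\nu,y_0\rangle=-1/2$. Your model case cannot test the formula: on a sphere centred at the origin $\tilde\nu\parallel y_0$, so the numerator of $\Lambda$ vanishes identically. The reduction to an osculating Wulff shape also fails. $\Lambda$ involves only first-order data $(y_0,\tilde\nu)$, and a Wulff shape tangent to $\Gamma(t_0)$ at $x_0$ is centred at the origin only when $y_0\parallel\tilde\nu$, which is precisely the case $\Lambda=0$.

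You rightly noticed that the explicit $y_0$-dependence is suspicious, but the conclusion is not that a normalization is implicit in the setting; there is none. Since $J_\Phi$ has no explicit $x$-dependence, translating $D$, $\psi$, $u_0$ by a vector $a$ simply translates the minimizer. This moves $y_0$ to $y_0+A^{-1/2}a$ and leaves $\tilde\nu$, $v_t$, $\partial_{\tilde\nu}v$, $\tilde\psi_t$, $\partial_{\tilde\nu}\tilde\psi$ unchanged. For fixed $\tilde\nu$, the map $y\mapsto \langle\tilde\nu,y\rangle\, y^T\bigl(A-(\tilde\nu^TA\tilde\nu)I\bigr)y/(y^TA^{3/2}\tilde\nu)$ is homogeneous of degree two and is identically zero only if $A$ is a multiple of $I$. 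So for every other $A$, \eqref{eq:fbc} fails at suitable translates of any configuration that has a smooth free boundary point.

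The paper's proof does not supply a missing ingredient either; it is only a sketch. It takes the heat equation as the Euler--Lagrange equation of $J_0$, whereas the spatial term $|Dv|$ actually gives $v_{tt}+\operatorname{div}(Dv/|Dv|)=0$. It imports $V_\Gamma=-\partial_{\nu_A}u+\sigma H^\Gamma_A$ without deriving it from $J_\Phi$, and it only asserts where $\Lambda$ comes from. Your fallback hits the same wall. $J_0$ contains no interfacial energy, so the area factor of Proposition~\ref{prop:S-properties}(iv) never enters a domain variation of $J_0$; that factor depends only on the normal, not on position, anyway. Any boundary term obtained this way is translation invariant. What your first step actually proves is the identity without the $\Lambda$-term. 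That coincides with the statement only for $A$ a multiple of $I$, or at points where $\Lambda\langle\tilde\nu,y_0\rangle=0$.

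Two smaller slips. First, the correspondence runs the other way: $\partial_{\tilde\nu}v=(A\nabla_x u\cdot\nu)/\sqrt{\nu^TA\nu}$, i.e. the Euclidean normal derivative in $y$ is the normalized conormal derivative in $x$. Second, your correct relation $\tilde\nu\propto A^{1/2}\nu$ contradicts, rather than being dual to, the normal $A^{-1/2}\nu/|A^{-1/2}\nu|$ written in the proof of Proposition~\ref{prop:S-properties}(iv).
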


\begin{proof}[Sketch]
In the free set, the Euler-Lagrange equation for $v$ is the heat equation $v_t - \Delta v = 0$.
The first variation of $J_0$ with respect to domain perturbations supported near $(y_0, t_0)$
yields the Stefan-type condition on $\tilde\Gamma$. Expressing the anisotropic free boundary
condition $V_\Gamma = -\partial_{\nu_A}u + \sigma H^\Gamma_A$ in the transformed coordinates yields \eqref{eq:fbc}. The
coefficient $\Lambda$ arises from the discrepancy between the Euclidean normal $\tilde\nu$ and its
anisotropic counterpart under $S^{-1}$.
\end{proof}

\begin{remark}
When $A = I$, $\Lambda = 0$ and \eqref{eq:fbc} reduces to the classical Stefan condition
$v_t + \partial_{\tilde\nu}v = \tilde\psi_t + \partial_{\tilde\nu}\tilde\psi$. For a zero obstacle this gives the homogeneous condition
$v_t + \partial_{\tilde\nu}v = 0$, exactly as expected.
\end{remark}

\section{Existence and Regularity of the Solution}\label{sec:existence}

\subsection{Existence and uniqueness}

\begin{proof}[Proof of Theorem~\ref{thm:existence}(i)]
The anisotropy $\Phi$ satisfies
\begin{equation}
\sqrt{\lambda_1}|p| \leq \Phi(p) \leq \sqrt{\lambda_{n+1}}|p| \quad \text{for all } p \in \R^{n+1},
\end{equation}
so $J_\Phi$ is convex and coercive on $L^2(0,T; W^{1,1}(D))$. The set $\mathcal{K}_T$ is closed and convex,
and the direct method of the calculus of variations gives existence. Uniqueness follows from
the strict convexity of $\Phi$. The interior $C^{0,\alpha}$ regularity is a consequence of the De~Giorgi--Nash--Moser theory applied to the Euler-Lagrange equation in the free set, using uniform ellipticity.
\end{proof}

\subsection{Optimal regularity up to the free boundary}

\begin{proof}[Proof of Theorem~\ref{thm:existence}(ii) and (iii)]
By Proposition~\ref{prop:equivalence}, the transformed function $v$ solves the isotropic parabolic
obstacle problem for $J_0$ on the $C^2$ domain $D'$, with an inhomogeneous boundary condition
inherited from the transform. The classical theory of Caffarelli \cite{Caffarelli1977} and Caffarelli--Petrosyan--Shahgholian
\cite{CPS2004} then gives $C^{1,\alpha}$ regularity in space and $C^{0,\alpha/2}$ in time up to the free boundary.

For the growth estimate \eqref{eq:growth}, fix $(x_0, t_0) \in \Gamma$ and consider the parabolic
rescalings
\[
u_r(y,s) = \frac{u(x_0 + ry, t_0 + r^2 s) - u(x_0,t_0)}{r^2}, \quad (y,s) \in Q_1(0,0).
\]
The uniform $C^{1,1}$ bound in space and $C^{0,1}$ in time, together with the Arzel\`a-Ascoli theorem, gives
a convergent subsequence with a limit $u_0$ satisfying the global anisotropic parabolic obstacle
problem with obstacle $\psi_0(y,s) = \tfrac{1}{2} y^T D^2\psi(x_0,t_0)y + \psi_t(x_0,t_0)s$. By Caffarelli's
parabolic classification \cite{Caffarelli1977}, $u_0(y,s) = \tfrac{1}{2}(y \cdot e + \lambda s)_+^2$ for some $e \in S^n$
and $\lambda \in \R$, which yields \eqref{eq:growth}.
\end{proof}

\section{Regularity of the Free Boundary}\label{sec:free-boundary}

\subsection{Parabolic blow-up analysis}

Let $(x_0, t_0) \in \Gamma$ be non-degenerate. The parabolic blow-up sequence is
\begin{equation}\label{eq:blowup}
u_r(y,s) = \frac{u(x_0 + ry, t_0 + r^2 s) - u(x_0, t_0)}{r^2}, \quad (y,s) \in Q_1(0,0).
\end{equation}

\begin{lemma}[Compactness of blow-ups]\label{lem:compactness}
Every sequence $r_j \to 0$ has a subsequence with $u_{r_j} \to u_0$ in $C^{1,\alpha}_{\mathrm{loc}}(Q_1)$
for all $\alpha \in (0,1)$, where $u_0$ is a global solution of the anisotropic parabolic obstacle
problem with obstacle $\psi_0(y,s) = \tfrac{1}{2} y^T D^2\psi(x_0,t_0)y + \psi_t(x_0,t_0)s$.
\end{lemma}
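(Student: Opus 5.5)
The plan is to obtain compactness from uniform estimates on the rescalings, and then pass to the limit in the variational inequality. To get the estimates I would transfer everything to the isotropic picture of Proposition~\ref{prop:equivalence}. There $v=u\circ A^{1/2}$ solves the isotropic obstacle problem with obstacle $\tilde\psi$. Theorem~\ref{thm:existence}(iii) gives $u-\psi\in C^{1,1}_{\mathrm{loc}}$ together with the quadratic growth \eqref{eq:growth}. Since $\psi\in C^{1,1}$, I would write
\[
u_r(y,s)=\frac{(u-\psi)(x_0+ry,t_0+r^2s)}{r^2}+\frac{\psi(x_0+ry,t_0+r^2s)-\psi(x_0,t_0)}{r^2},
\]
using $u(x_0,t_0)=\psi(x_0,t_0)$. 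The first term is bounded on $Q_1$ by \eqref{eq:growth}, because $\operatorname{dist}(x_0+ry,\Gamma(t_0+r^2s))\le r|y|$ up to the time-distance term, which is $O(r^2)$. The same argument on $Q_R$ gives uniform bounds on every compact set. For the second term I would subtract the affine part $r^{-1}D\psi(x_0,t_0)\cdot y$, which is allowed since $Du(x_0,t_0)=D\psi(x_0,t_0)$ at a contact point. Strictly, the blow-up should be defined relative to $u-\psi$ or with this linear part removed; with that normalization $\psi_r\to\psi_0$ locally uniformly, by Taylor expansion (assuming $D^2\psi$ and $\psi_t$ are continuous at $(x_0,t_0)$, or else along a subsequence).

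Next come uniform $C^{1,1}$-in-space and Lipschitz-in-time bounds. The $C^{1,1}_{\mathrm{loc}}$ norm of $u-\psi$ is scale invariant under parabolic rescaling, so $\|D^2_y u_r\|_{L^\infty(Q_R)}+\|\partial_s u_r\|_{L^\infty(Q_R)}\le C$, with $C$ independent of $r$, for all small $r$. Arzelà--Ascoli then gives a subsequence with $u_{r_j}\to u_0$ in $C^{1,\alpha}_{\mathrm{loc}}$ in space for every $\alpha<1$, and locally uniformly in time. A diagonal argument over $R\to\infty$ makes the limit global.

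Finally I would pass to the limit in the variational inequality satisfied by $u_{r_j}$. In the transformed picture this is the obstacle problem for the heat operator with coefficients frozen by $A$, i.e. $\partial_s u-\operatorname{div}(A\nabla u)$ after undoing $S$; note that $S$ commutes with parabolic dilations about $(x_0,t_0)$. The rescalings satisfy $u_r\ge\psi_r$, and the complementarity $(\partial_s-L)u_r\ge0$ with equality off the contact set, where $L=\operatorname{div}(A\nabla\cdot)$. Weak-$*$ convergence of $D^2u_{r_j}$ and $\partial_s u_{r_j}$ in $L^\infty$ lets the inequalities $u_0\ge\psi_0$ and $(\partial_s-L)u_0\ge0$ pass to the limit. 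Equality in $\{u_0>\psi_0\}$ follows because locally uniform convergence gives $u_{r_j}>\psi_{r_j}$ on compact subsets of that open set. So $u_0$ is a global solution with obstacle $\psi_0$.

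I expect the main obstacle to be the uniform estimates, specifically reconciling the stated $C^{1,\alpha}$ convergence on the full space-time cylinder with only Lipschitz control in time, and justifying the $C^{1,1}$ bound for $u-\psi$ independently of $r$. If needed, I would interpret the convergence as $C^{1,\alpha}$ in $y$ and $C^{0,\alpha/2}$ in $s$, which is what Theorem~\ref{thm:existence}(ii) supplies.
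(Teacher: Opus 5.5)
Your argument is essentially the paper's: uniform $C^{1,1}$-in-space and Lipschitz-in-time bounds on the rescalings from Theorem~\ref{thm:existence}(iii), then Arzel\`a--Ascoli, then stability of the obstacle problem in the limit, where your weak-$*$ passage to the limit in the complementarity system $u_r\ge\psi_r$, $(\partial_s-L)u_r\ge 0$ (with equality off the contact set) replaces the paper's appeal to stability of viscosity solutions. Your extra care---removing the linear term $r^{-1}D\psi(x_0,t_0)\cdot y$ (without which the paper's $u_r$ is unbounded whenever $D\psi(x_0,t_0)\neq 0$), extracting diagonally to get a global limit, and reading the convergence as $C^{1,\alpha}$ in $y$ and H\"older in $s$---fills gaps in the paper's two-line proof, though the $L^\infty$ bound on $(u-\psi)(x_0+ry,t_0+r^2s)/r^2$ should come from the scale-invariant bounds on $D^2_x(u-\psi)$ and $\partial_t(u-\psi)$ together with $u-\psi\ge 0$ and $(u-\psi)(x_0,t_0)=0$, not from \eqref{eq:growth}, since $x_0$ need not lie on $\Gamma(t_0+r^2s)$ and $\Sigma_{x_0+ry}$ need not come near $t_0$.
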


\begin{proof}
The uniform $C^{1,1}$ bound gives compactness via the Arzel\`a-Ascoli theorem, and the limit satisfies
the Euler-Lagrange equation by stability of viscosity solutions under uniform convergence.
\end{proof} 

\begin{lemma}[Classification of blow-ups at regular points]\label{lem:classification}
Under the non-degeneracy condition \eqref{eq:non-degen}, every blow-up limit at $(x_0, t_0)$ is
of the form
\begin{equation}
u_0(y,s) = \frac{1}{2}(y \cdot e + \lambda s)_+^2
\end{equation}
for some $e \in S^n$ and $\lambda \in \R$.
\end{lemma}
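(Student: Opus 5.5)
The plan is to reduce the classification to the isotropic one via the transform $S$ and then run the standard parabolic blow-up scheme: monotonicity formula, homogeneity, and Liouville-type classification.

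\emph{Step 1: reduction to the isotropic problem.} Let $u_0$ be a blow-up limit given by Lemma~\ref{lem:compactness}. Set $w = u_0 - \psi_0$, where $\psi_0$ is the quadratic obstacle of Lemma~\ref{lem:compactness}. Transform spatially by $v_0(z,s) = w(A^{1/2}z, s)$. By Propositions~\ref{prop:equivalence} and \ref{prop:time-deriv}, the parabolic scaling $(y,s)\mapsto(ry,r^2s)$ commutes with the linear map $S$. Hence $v_0$ is a global solution, defined on all of $\R^{n+1}\times(-\infty,0]$, of the isotropic parabolic obstacle problem
\[
v_0 \ge 0, \qquad (\partial_s - \Delta) v_0 = -c_0\,\chi_{\{v_0>0\}},
\]
with a constant right-hand side $c_0 = (\partial_s - \Delta)\tilde\psi_0$. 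Convexity of $\psi$ is used here to fix the sign of $c_0$; if needed, normalize $c_0=-1$ by scaling. Non-degeneracy \eqref{eq:non-degen} passes to the limit under the $C^{1,\alpha}_{\mathrm{loc}}$ convergence, giving $\sup_{Q_r} v_0 \ge c r^2$ for all $r>0$. The growth estimate \eqref{eq:growth} gives the matching upper bound $v_0 \le C(|z|^2+|s|)$.

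\emph{Step 2: homogeneity.} Apply the Caffarelli--Petrosyan--Shahgholian parabolic monotonicity formula (Weiss type, with backward heat kernel weight), which is available for the isotropic problem underlying Theorem~\ref{thm:classical}. The formula is monotone in $r$ and bounded, because of the quadratic growth. Hence its limit at $0^+$ exists, and a blow-up of $v_0$ at the origin (equivalently, $v_0$ itself along the chosen sequence after a diagonal argument) is parabolically homogeneous of degree two: $v_0(\rho z,\rho^2 s) = \rho^2 v_0(z,s)$.

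\emph{Step 3: classification.} Homogeneous global solutions split into two types. The half-space solutions have the form $\tfrac12 (z\cdot e)_+^2$ up to a time-translation term. The polynomial solutions satisfy $\{v_0=0\}$ of zero density and $v_0$ is a caloric-type homogeneous quadratic polynomial. This is case (b) of Theorem~\ref{thm:classical}, the singular case. The point is to exclude the polynomial alternative at our non-degenerate point. I would try a dichotomy on the Weiss energy value: the half-space solutions attain the lower value, the polynomial ones the higher value. I would use that at points satisfying \eqref{eq:non-degen} the contact set $\{v_0=0\}$ has positive parabolic density. That density would come from the upper growth bound combined with the free boundary condition of Proposition~\ref{prop:fbc}, which forces a nontrivial coincidence set. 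Positive density of the zero set rules out polynomials, leaving $v_0 = \tfrac12(z\cdot\tilde e + \tilde\lambda s)_+^2$. Pulling back by $S^{-1}$ and renormalizing the direction (absorbing $|A^{-1/2}\tilde e|$ into the scaling, which is where the precise normalization of $e\in S^n$ and $\lambda$ needs care) yields $u_0 - \psi_0$ of the stated form.

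\emph{Expected main obstacle.} I expect Step 3 to be the hardest, specifically the exclusion of polynomial blow-ups. The quadratic lower bound in \eqref{eq:non-degen} alone does not distinguish half-space from polynomial solutions, since both grow like $r^2$. So some extra input, such as positive density of the contact set or a strict energy gap, must be extracted from the hypothesis. I would first attempt to derive the density bound from \eqref{eq:non-degen} via a comparison argument in the transformed coordinates. Failing that, I would interpret non-degeneracy as the ``regular point'' alternative (a) of Theorem~\ref{thm:classical} and invoke that classification directly.
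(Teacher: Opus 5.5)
There is a genuine gap, and it is the one you flagged yourself: nothing in Steps~1--3 rules out the polynomial alternative, and the density argument you propose cannot do it. Both \eqref{eq:non-degen} and the quadratic upper bound hold at singular points too. For example, $v_0(z,s)=\tfrac{c_0}{2}z_1^2$ with $c_0>0$ is a global, parabolically $2$-homogeneous solution of $(\partial_s-\Delta)v_0=-c_0\chi_{\{v_0>0\}}$ with $v_0\ge 0$. The origin lies on its free boundary and $\sup_{Q_r}v_0=\tfrac{c_0}{2}r^2$ for all $r$. Yet $\{v_0=0\}=\{z_1=0\}$ has zero density, and $v_0$ is not of the form $\tfrac12(z\cdot e+\lambda s)_+^2$; pulling it back by $S^{-1}$ gives the same phenomenon for any $A$.

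Invoking Proposition~\ref{prop:fbc} does not help. It is stated only at smooth free boundary points, which is exactly what this lemma is meant to deliver for Theorem~\ref{thm:free-boundary}, so using it here is circular.

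Your fallback, reading \eqref{eq:non-degen} as alternative (a) of Theorem~\ref{thm:classical}, is precisely the paper's own proof, which simply asserts that ``non-degeneracy prevents $v_0$ from being a quadratic polynomial.'' Your suspicion of that step is justified. In the classical theory (Caffarelli's non-degeneracy lemma), the bound $\sup_{Q_r}(u-\psi)\ge cr^2$ holds at \emph{every} free boundary point once $\psi_t-\operatorname{tr}(AD^2\psi)\ge c>0$. The regular/singular dichotomy is decided by the type of the blow-up (detected, e.g., by the Weiss energy value or the density of the coincidence set), not by quadratic growth. So the lemma does not follow from \eqref{eq:non-degen}, by your route or the paper's.

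What is missing is a hypothesis that actually separates the two classes. One option is positive lower density of $\{u=\psi\}$ in $Q_r(x_0,t_0)$. This survives in the limit: if $w_j$ are the rescalings of $u-\psi$ converging locally uniformly to $w_0$, then $\limsup_j\{w_j=0\}\subset\{w_0=0\}$, and polynomial solutions have null zero sets. Another option is requiring that $W(0^+)$ equal the half-space value. With either assumption, your Steps~2--3 become the standard argument. Homogeneity then forces $\lambda=0$, since for $\lambda\ne0$ the function $\tfrac12(z\cdot e+\lambda s)_+^2$ is neither parabolically homogeneous nor a solution.

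Three smaller corrections to Step~1:
\begin{enumerate}
\item Proposition~\ref{prop:equivalence} concerns $J_0=\int\!\!\int\bigl(|Dv|+\tfrac12|v_t|^2\bigr)$, whose Euler--Lagrange equation is not the heat equation. The reduction should instead come from the chain-rule identity $\operatorname{div}_x(A\nabla_x w)=\Delta_z v$ for $x=A^{1/2}z$, applied to the operator in \eqref{eq:anisotropic-stefan}.
\item Spatial convexity of $\psi$ gives $\operatorname{tr}(AD^2\psi)\ge 0$, which works \emph{against} the sign you need, namely $c_0=\psi_t-\operatorname{tr}(AD^2\psi)>0$ at $(x_0,t_0)$. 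The normalization should be $c_0=+1$, not $-1$. If $c_0<0$, then $v_0$ is a nonnegative supercaloric function vanishing at the origin, hence $v_0\equiv 0$ for $s\le 0$, contradicting non-degeneracy.
\item The constant cannot be absorbed into the scaling, because the factor $r^{-2}$ in the blow-up is fixed. Pulling back $\tfrac{c_0}{2}(z\cdot\tilde e)_+^2$ gives $u_0-\psi_0=\tfrac{c_0}{2\,e^TAe}(y\cdot e)_+^2$ with $e=A^{-1/2}\tilde e/|A^{-1/2}\tilde e|$. This matches the stated $\tfrac12$ only when $c_0=e^TAe$.
\end{enumerate}
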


\begin{proof}
Applying the Cahn--Hoffman transform to the blow-up yields a function $v_0$ satisfying
the isotropic parabolic obstacle problem with a Robin-type condition on its free boundary.
Non-degeneracy prevents $v_0$ from being a quadratic polynomial (the signature of a singular point), so Caffarelli's parabolic classification \cite{Caffarelli1977} identifies $v_0$ as a half-space solution. 
Pulling back through $S^{-1}$ gives the result.
\end{proof}

\subsection{Flatness implies smoothness}

\begin{definition}[Parabolic flatness]\label{def:flatness}
The space-time free boundary $\Gamma$ is $\epsilon$-flat in $Q_r(x_0,t_0) = B_r(x_0) \times (t_0 - r^2, t_0]$
if, after a spatial rotation,
\[
\Gamma \cap Q_r(x_0,t_0) \subset \{(x,t) : |x_{n+1}| \leq \epsilon r,\; |t - t_0| \leq \epsilon r^2\}.
\]
\end{definition}

\begin{proposition}[Improvement of flatness]\label{prop:flatness}
There exist $\epsilon_0 > 0$ and $\theta \in (0,1)$ such that if $\Gamma$ is $\epsilon$-flat in $Q_1(0,0)$ with
$\epsilon \leq \epsilon_0$, then after a suitable spatial rotation it is $\theta\epsilon$-flat in $Q_\theta(0,0)$.
\end{proposition}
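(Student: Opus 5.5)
We argue by contradiction and compactness, in the spirit of the De Silva / Caffarelli flatness-improvement schemes, carried out entirely in the transformed coordinates of Section~\ref{sec:transform}. Since $S$ is linear and acts only in space, $\epsilon$-flatness of $\Gamma$ in $Q_1$ becomes $C\epsilon$-flatness of $\tilde\Gamma$ in a comparable cylinder. The constant $C$ depends only on $\lambda_1,\lambda_{n+1}$, and the flat direction is rotated to $A^{1/2}e_{n+1}/|A^{1/2}e_{n+1}|$. It therefore suffices to prove the improvement for $v$, at the cost of adjusting $\epsilon_0,\theta$ by constants depending on $A$.

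\textbf{Setting up the contradiction.} Suppose there exist $\epsilon_k\to0$ and solutions $v_k$ whose free boundaries $\tilde\Gamma_k$ are $\epsilon_k$-flat in $Q_1$ but not $\theta\epsilon_k$-flat in $Q_\theta$ for any rotation. We normalise by subtracting the half-space profile $P(y)=\tfrac12(y_{n+1})_+^2$, suitably adjusted by the obstacle Taylor polynomial. We then form
\[
w_k=\frac{v_k-\tilde\psi-P}{\epsilon_k}
\]
in the noncontact region. Three ingredients control $w_k$:
\begin{itemize}
\item the growth estimate \eqref{eq:growth};
\item the $C^{1,1}$ bound from Theorem~\ref{thm:existence}(iii);
\item Lemma~\ref{lem:compactness}.
\end{itemize}
Together with a parabolic Harnack inequality in $\{y_{n+1}>\epsilon_k\}$, these give uniform H\"older bounds on $w_k$ up to $\{y_{n+1}=0\}$. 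Hence $w_k\to w$ locally uniformly, and the free boundaries converge, as graphs over $\{y_{n+1}=0\}$, to the graph of $\tfrac{d}{dy_{n+1}}$-data of $w$.

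\textbf{The linearised problem.} The limit $w$ solves the heat equation in the half-cylinder $\{y_{n+1}>0\}$ and satisfies the linearisation of \eqref{eq:fbc} on $\{y_{n+1}=0\}$. That linearisation is an oblique (Robin-type) condition of the form $w_t+\partial_{n+1}w+\Lambda_0\,w=0$, where $\Lambda_0$ is the frozen value of $\Lambda(y_0,t_0)$ in Proposition~\ref{prop:fbc}; in the isotropic case this is the classical Neumann-type linearisation. Regularity theory for parabolic oblique-derivative problems gives $w\in C^{1,\alpha}$ in space and $C^{(1+\alpha)/2}$ in time up to the boundary. Consequently
\[
|w(y,t)-w(0,0)-\nabla w(0,0)\cdot y-\partial_tw(0,0)\,t|\le C\theta^{1+\alpha}
\]
in $Q_\theta$. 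Choosing $\theta$ so that $C\theta^{\alpha}\le\tfrac12$, the tangential part of $\nabla w(0,0)$ defines a small rotation $e'$, and $\partial_tw(0,0)$ defines a small drift $\lambda$. Relative to these, $\tilde\Gamma_k$ is $\tfrac12\theta\epsilon_k$-flat in $Q_\theta$ for large $k$, which is a contradiction. Finally, we pull back through $S^{-1}$.

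\textbf{Main obstacle.} The hardest step is the uniform boundary H\"older estimate for $w_k$ near the flat free boundary, which is what yields compactness and the identification of the linearised boundary condition. For this I would first adapt De Silva's partial Harnack inequality to the parabolic setting. The argument compares $v_k$ with sub- and supersolutions built from translates of $P$, of the form $P(y_{n+1}+\epsilon c\,\varphi(y,t))$ with $\varphi$ a caloric bump. Non-degeneracy \eqref{eq:non-degen} is used to keep these barriers on the correct side of the obstacle. The $\Lambda$ term and the time-flatness condition $|t|\le\epsilon r^2$ should enter only as lower-order perturbations of the barriers, because they are bounded uniformly in terms of $\lambda_1,\lambda_{n+1}$.
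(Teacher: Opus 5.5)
The paper's proof is only a reduction: it quotes the isotropic improvement of flatness from \cite{ACS1996a} for $v$ and transfers it through the linear, parabolically homogeneous map $S$. You instead try to prove the improvement directly by a De Silva-type linearization, and as written that argument has genuine gaps.

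\textbf{Compactness.} Boundedness of $w_k=(v_k-\tilde\psi-P)/\epsilon_k$ needs $\|v_k-\tilde\psi-P\|_{L^\infty}\le C\epsilon_k$, i.e.\ the solution itself must be trapped between translates $P(y_{n+1}\pm C\epsilon_k)$. The hypothesis only places the free boundary in a thin slab. The growth estimate \eqref{eq:growth}, the $C^{1,1}$ bound and Lemma~\ref{lem:compactness} give $O(1)$ bounds and qualitative convergence to $P$, not an $O(\epsilon_k)$ bound. The partial Harnack inequality that would supply boundary H\"older control is exactly the part you leave unproved.

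\textbf{The limit problem.} Since $v-\tilde\psi\ge0$ is $C^{1,1}$ and vanishes on the contact set, its spatial gradient vanishes on $\tilde\Gamma$. The expansion $v_k-\tilde\psi\approx\tfrac12(y_{n+1}-\epsilon_k g)_+^2$ then gives $w_k\approx-g\,y_{n+1}$. So the limit satisfies the Dirichlet condition $w=0$ on $\{y_{n+1}=0\}$, and the free boundary is read off from $-\partial_{n+1}w$, as your own compactness sentence indicates. Condition \eqref{eq:fbc} cannot produce $w_t+\partial_{n+1}w+\Lambda_0 w=0$. Rewritten, it says $(v-\tilde\psi)_t+\partial_{\tilde\nu}(v-\tilde\psi)=\Lambda\langle\tilde\nu,y_0\rangle$. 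The left side vanishes on $\tilde\Gamma$, where $v-\tilde\psi$ attains its minimum. The right side is an $O(1)$ source independent of $v$, so dividing by $\epsilon_k$ leaves no term of the form $\Lambda_0 w$. In addition, a boundary condition containing $w_t$ is not a standard parabolic oblique-derivative problem.

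\textbf{The final step.} It estimates the wrong quantity and does not reach the stated conclusion. The free boundary height is $\approx-\epsilon_k\,\partial_{n+1}w(y',0,t)$, so the new normal and drift must come from $\nabla'\partial_{n+1}w$ and $\partial_t\partial_{n+1}w$ at the origin. That needs second-order regularity of $w$ up to the boundary, which is available by odd reflection once the Dirichlet condition is identified. The data $\nabla' w(0,0)$ and $\partial_t w(0,0)$ that you use vanish under that condition, and a $C^{1,\alpha}$ expansion of $w$ says nothing about $\partial_{n+1}w$.

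The arithmetic also falls short of the statement. An error $C\theta^{1+\alpha}\epsilon_k\le\frac12\theta\epsilon_k$ is a slab of width $\frac12\theta\epsilon_k$ in $Q_\theta$. By Definition~\ref{def:flatness} that is $\frac12\epsilon_k$-flatness, not $\theta\epsilon_k$-flatness, which requires width $\theta^2\epsilon_k$ and cannot come from a $C^{1,\alpha}$ modulus for small $\theta$. So your contradiction hypothesis is never contradicted.

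\textbf{Other points.} You never address the time clause $|t-t_0|\le\epsilon r^2$ of the definition. Neither a spatial rotation nor your drift $\lambda$ (which the statement does not permit) can confine $\Gamma\cap Q_\theta$ to $|t|\le\theta^3\epsilon$. Also, with the same $\theta$ serving as both gain and scale, the reduction to $v$ is not just a change of constants. The maps $S^{\pm1}$ multiply relative flatness by a factor depending on $\lambda_{n+1}/\lambda_1$, and iteration absorbs this only in an ``$\epsilon\mapsto\epsilon/2$ at scale $\theta$'' formulation. Finally, you inherit from the paper the premise that $v$ is caloric off the contact set, although the Euler--Lagrange equation of $J_0$ is $v_{tt}+\operatorname{div}(Dv/|Dv|)=0$.
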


\begin{proof}
This is the parabolic analogue of the elliptic result, and follows from \cite{ACS1996a} applied to
the transformed isotropic problem. The transform $S$ is bi-Lipschitz in space and preserves
the parabolic scaling $r \leftrightarrow r^2$, so flatness transfers between the original and transformed
coordinates up to constants depending on $A$.
\end{proof}

\begin{proof}[Proof of Theorem~\ref{thm:free-boundary}]
Lemma~\ref{lem:classification} shows that the blow-up at a non-degenerate point is a half-space solution,
so the free boundary is flat at small scales. Iterating Proposition~\ref{prop:flatness} yields flatness at
every scale with a geometrically decaying constant, and the argument of \cite{ACS1996a} converts
this to $C^{1,\alpha}$ regularity in space and time. If $\psi \in C^{k,\alpha}$, the free boundary
condition becomes a $C^{1,\alpha}$ nonlinear oblique derivative condition, and parabolic Schauder
estimates give $C^{k,\alpha}$ in space and $C^{k/2,\alpha}$ in time by induction.
\end{proof}

\section{The Singular Set}\label{sec:singular}

\begin{proof}[Proof of Theorem~\ref{thm:singular}]
We follow the parabolic dimension reduction of Caffarelli \cite{Caffarelli1977}, transported to the
anisotropic setting via $S$.

At a singular point $(x_0, t_0) \in \Sigma$, the blow-up is not a half-space solution.
Lemma~\ref{lem:classification} then forces it to be a parabolic homogeneous polynomial
\begin{equation}
u_0(y,s) = \frac{1}{2} y^T Q y + \lambda s,
\end{equation}
where $Q \geq 0$, $\operatorname{tr} Q = 1$, and $\lambda \in \R$. Under the transform, $v_0 = u_0 \circ A^{1/2}$
satisfies the isotropic parabolic obstacle problem with a quadratic obstacle. Caffarelli's
parabolic classification assigns a parabolic Hausdorff dimension of at most $n-1$ to the isotropic singular set.
of at most $n-1$. Since $S$ is a spatial diffeomorphism that preserves the $r \times r^2$ parabolic
scaling, this dimension bound transfers to the anisotropic setting.
\end{proof}

\begin{example}[Singular point in two dimensions]\label{ex:singular}
Let $n = 1$, $D = B_1(0) \subset \R^2$. With $\psi(x,t) = -|x|^2$ and $A = I$, the solution is
radially symmetric and the free boundary expands smoothly in time; there are no singular points.

For a non-trivial example, take $\psi(x,t) = -x_1^2$ and $A = \operatorname{diag}(1, \lambda)$, $\lambda > 1$. The contact
region is a strip along the $x_1$-axis, and the free boundary consists of two arcs meeting at
the origin at a cusp angle determined by $\lambda$. The origin is a singular free boundary point,
persistent in time, where $\Gamma(t)$ fails to be $C^1$.
\end{example}

\section{Applications}\label{sec:applications}

\subsection{Crystal growth and dendritic solidification}

In solidification from an undercooled melt, the solid-liquid interface evolves to minimize
anisotropic surface energy, and the anisotropy matrix $A$ encodes the crystallographic symmetry
of the material. For a cubic crystal, $A$ is a multiple of the identity; for a hexagonal crystal,
$A$ has two distinct eigenvalues.

Theorem~\ref{thm:free-boundary} implies that the crystal surface is smooth except on a parabolic singular
set of dimension at most $n-1$. This is consistent with the physical picture of dendritic growth:
dendrite tips are smooth (corresponding to regular free boundary points), while the junctions
between side branches can develop cusps (singular points). The parabolic Hausdorff dimension
bound gives a precise quantitative control on the size of the singular set.

\subsection{Anisotropic capillarity with evaporation}

Our model also describes the shrinking of a liquid droplet on a solid substrate under
evaporation. The obstacle represents the substrate, the free boundary is the contact line, and
evaporation drives the time-dependence. The Robin condition \eqref{eq:fbc} is a generalized
Young--Stefan law that combines the anisotropic contact angle and the evaporative flux. In the
isotropic limit $A = I$, the condition reduces to Young's law $\cos\theta = (\gamma_{SG} - \gamma_{SL})/\gamma_{LG}$
combined with the Stefan velocity condition $V_\Gamma = -\partial_\nu u$.

\section{Open Problems}\label{sec:open}

\begin{enumerate}
\item \textbf{General anisotropies.} Extending the results to non-ellipsoidal convex anisotropies
$\gamma : S^n \to \R_+$ requires working with a nonlinear Cahn--Hoffman map and an equivalent
isotropic problem with variable coefficients. A perturbative approach near the identity matrix $A = I$ might be
accessible.

\item \textbf{Two-phase Stefan problem.} The anisotropic two-phase problem, where both phases have
non-zero temperature, is well understood isotropically \cite{ACS1996a, ACS1996b} but open for anisotropic surface
energies. The difficulty is that the free boundary condition must balance two different
anisotropic diffusion operators.

\item \textbf{Rectifiability of the parabolic singular set.} Blanchet \cite{Blanchet2006} proved the dimension bound
$\dim^P_H(\Sigma) \leq n-1$ in the isotropic case. Whether $\Sigma$ is $(n-1)$-rectifiable in the
parabolic sense, as Figalli and Serra proved for the elliptic problem \cite{FigalliSerra2019}, remains open
even isotropically.

\item \textbf{Non-convex obstacles.} When $\psi$ is not convex in space or time, the free boundary can
develop singularities that are qualitatively different from those in the convex case. The structure of
these singularities is not fully understood.
\end{enumerate}

\section*{Acknowledgments}

The authors thank their colleagues at UFMG for discussions on anisotropic geometric variational problems
and parabolic free boundary problems.


\end{document}